\documentclass{article}

\usepackage{subfigure}
\usepackage{PRIMEarxiv}
\usepackage{amsmath,amssymb,amsthm}
\usepackage{mathrsfs}
\usepackage[utf8]{inputenc} 
\usepackage[T1]{fontenc}    
\usepackage{hyperref}       
\usepackage{url}            
\usepackage{booktabs}       
\usepackage{amsfonts}       
\usepackage{nicefrac}       
\usepackage{microtype}      
\usepackage{lipsum}
\usepackage{graphicx}
\usepackage[ruled,vlined]{algorithm2e}
\usepackage[utf8]{inputenc}
\usepackage{authblk}
\usepackage{indentfirst}
\graphicspath{{media/}}     


\newcommand{\argmin}{\mathop{\arg\min}}
\newcommand{\inner}[1]{\left\langle#1\right\rangle}
\newcommand{\norm}[1]{\left\|#1\right\|}
\newcommand{\X}{\overset{\cdot}{X}}
\newcommand{\XX}{\overset{\cdot\cdot}{X}}
\newcommand{\E}{\mathcal{E}}
\newcommand{\EE}{\overset{\cdot}{\mathcal{E}}}
\newcommand{\Z}{\overset{\cdot}{Z}}
\newcommand{\Fone}[1][L]{\mathscr{F}^1_{#1}}

\newtheorem{theorem}{Theorem}
\newtheorem{lemma}{Lemma}
\newtheorem{corollary}{Corollary}

\makeatletter
\renewcommand*{\@fnsymbol}[1]{\ensuremath{\ifcase#1\or *\or \dagger\or \ddagger\or
		\mathsection\or \mathparagraph\or \|\or **\or \dagger\dagger
		\or \ddagger\ddagger \else\@ctrerr\fi}}
\makeatother

\title{Analysis Accelerated Mirror Descent via High-resolution ODEs\thanks{This work was partially supported by grant 12288201 from NSF of China}
}
\author[1]{Ya-xiang Yuan}
\author[ 1,2]{Yi Zhang\thanks{Corresponding author: zhangyi2020@lsec.cc.ac.cn}  }
\affil[1]{Institute of Computational Mathematics and Scientific/Engineering Computing, Academy of Mathematics and Systems
	Science, Chinese Academy of Sciences, Beijing 100190, China}
\affil[2]{University of Chinese Academy of Sciences, Beijing 100049, China}


\begin{document}
	\maketitle

	\begin{abstract}
		Mirror descent plays a crucial role in constrained optimization and acceleration schemes, along
		with its corresponding low-resolution ordinary differential equations (ODEs) framework have been
		proposed. However, the low-resolution ODEs are unable to distinguish between Polyak’s heavy-ball
		method and Nesterov’s accelerated gradient method. This problem also arises with accelerated mirror
		descent. To address this issue, we derive high-resolution ODEs for accelerated mirror descent and
		propose a general Lyapunov function framework to analyze its convergence rate in both continuous
		and discrete time. Furthermore, we demonstrate that accelerated mirror descent can minimize the
		squared gradient norm at an inverse cubic rate.
	\end{abstract}

	\keywords{Ordinary differential equations \and Lyapunov functions \and Gradient minimization}

	\section{Introduction}
	During the era of big data, machine learning has emerged as a crucial method for discovering information in high-dimensional datasets. Reinforcement learning, as a key branch of machine learning, frequently involves solving optimization problems within specific convex sets. However, due to the curse of dimensionality, researchers have focused on simple first-order methods in both theoretical and practical contexts.
	
	Throughout this paper, we will be considering constrained optimization problems,\begin{equation}
		\min_{x\in C} f(x),
	\end{equation}
	where $ f $ is a smooth convex function and $ C $ is a closed convex set. For unconstrained optimization problem, perhaps gradient descent is the simplest first-order method. Mirror descent proposed in \cite{Nemirovsky84} provides an efficient generalization of gradient descent to non-Euclidean geometries in constrained optimization, and has variety applications in policy gradient\cite{sutton2018} , as well as online optimization \cite{bubeck2011, cesa2006}. Fixing a step size $ s $, mirror descent is given by the recursive rule\begin{equation}
		x_{k+1}=\argmin_{x}\left\{s\inner{\nabla f(x_k),x-x_k}+\frac{D_\varphi(x,x_k)}{\sigma}\right\},
	\end{equation}
	where $ D_\varphi(\cdot,\cdot) $ represents Bergman divergence respect to a given $ \sigma- $strongly convex function $ \varphi $ on $ C $. 
	
	However, gradient descent methods always converge slowly in practice. About 40 years ago, Nesterov proposed the following accelerated gradient scheme\cite{nesterov83, nesterov18}: strating with $ x_0 $ and $ y_0=x_0 $, the iteration is \begin{equation}
		\begin{split}
			y_{k+1}&=x_k-\frac{1}{L}\nabla f(x_k),\\
			x_{k+1}&=y_{k+1}+\frac{k}{k+3}(y_{k+1}-y_k).
		\end{split}
		\label{eq:nesterovgradient}
	\end{equation}
	Where the parameter $ L $ is the Lipschitz constant of $ \nabla f $. This scheme achieves the convergence rate\[
	f(x_k)-f(x^*)\leqslant O\left(\frac{\norm{x_0-x^*}^2}{k^2}\right),
	\]
	compared to the convergence rate of gradient descent\[
	f(x_k)-f(x^*)\leqslant O\left(\frac{\norm{x_0-x^*}^2}{k}\right).
	\]
	Above $ x^* $ is the global minimizer of $ f $. Due to its fascinating theoretical results, accelerated methods has numerous extensions in convex optimization problem for unconstrained optimization\cite{nesterov12}, composite optimization\cite{beck2009, nesterov13} and stochastic optimization\cite{ghadimi2016}. In \cite{nesterov05}, Nesterov's proposed the following accelerated mirror descent\begin{equation}
		\begin{split}
			y_{k+1}&=\argmin_y\left\{\inner{\nabla f(x_k),y-x_k}+\frac{L}{2}\norm{x-x_k}^2\middle|y\in C\right\},\\
			z_{k+1}&=\argmin_z\left\{\frac{L}{\sigma}\varphi(z)+\sum_{i=0}^{k}\frac{i+1}{2}[f(x_i)+\inner{\nabla f(x_i),z-x_i}]\middle|z\in C\right\},\\
			x_{k+1}&=\frac{2}{k+3}z_{k+1}+\frac{k+1}{k+3}y_{k+1}.
		\end{split}
		\label{eq:acceleratedmirror}
	\end{equation}
	The initial feasible point $ x_0 $ is the $ \varphi $-center of $ C $, i. e. the global minimizer of $ \varphi $ over $ C $. Similarly, accelerated mirror descent admits $ O(1/k^2) $ convergence rate for $ f(y_k)-f(x^*). $
	
	There is a long history understanding the accelerated phenomenon of Nesterov's accelerated gradient method. The original idea is from estimate-sequence technique\cite{nesterov83}, which has been applied to \cite{nesterov05, nesterov12, nesterov13, tseng08, beck2009}. In recent year, Su et al.\cite{su14} consider the ODE\begin{equation}
		\XX+\frac{3}{t}\X+\nabla f(X)=0,
		\label{eq:lowode}
	\end{equation}
	for $ t>0 $ with initial $ x(0)=x_0, \X(0)=0 $, which is the continuous limit of \eqref{eq:nesterovgradient}, and can prove the convergence rate of \eqref{eq:nesterovgradient} by making use of Lyapunov function framework. Originally, Lyapunov function is used to analysis the stability of dynamic systems, see \cite{lyapunov92}, nowadays it has been generalized to analyze other accelerated first-order method, such as FISTA\cite{attouch14,attouch18,attouch16}, mirror descent\cite{krichene2015}, monotone operator\cite{ryu22}. Wibisono et al.\cite{wibisono16} shows that the trajectories of \eqref{eq:lowode} minimizes the Bergman Lagrangian and the continuous-limit of accelerated higher-order gradient method  correspond to traveling the same
	curve in spacetime at different speeds. In \cite{shi21}, Shi et al. proposed the high-resolution ODEs by introducing a gradient correction term or hessian-driven damping into \eqref{eq:lowode} and showed that the high-resolution ODEs can differentiate between Nesterov's accelerated gradient method and heavy ball method. Also they used high-resolution ODEs framework to prove that\[
	\min_{0\leqslant i\leqslant k}\norm{\nabla f(x_i)}^2\leqslant O\left(\frac{\norm{x_0-x^*}^2}{k^3}\right).
	\]
	Nowadays, high-resolution ODEs techniques have been utilized to analyze other optimization problems, which can be found in \cite{chavdarova21,lin2021}.
	\subsection{Overview of contributions}
	\begin{itemize}
		\item In section \ref{sec:2}, we establish the continuous limits of mirror descent and analyze its convergence in both continuous and discrete time settings using the Lyapunov function framework. These results provide valuable insights into extending the Lyapunov function framework to accelerated schemes.
		\item Inspired by \cite{shi21}, we derive the high-resolution ODEs for accelerated mirror descent \eqref{eq:acceleratedmirror} in section \ref{sec:3}, given by\begin{equation}
			\begin{split}
				\Z&=-\frac{t}{2}\nabla f(X),\\
				\nabla\varphi^*(Z)&=\frac{t}{2}\X+\frac{t}{2}\sqrt{s}\nabla f(X)+X.
			\end{split}
			\label{eq:highmirrorode}
		\end{equation}
		with initial $ X(0)=x_0, Z(0)\in\partial\varphi(x_0), \X(0)=0 $. Next, we exploit the Lyapunov function to prove that the solution trajectories of \eqref{eq:highmirrorode} are such that $ f(X(t))-f(x^*)\leqslant O(\frac{1}{t^2}) $. In discrete-time cases, we discuss the convergence rate of \eqref{eq:acceleratedmirror} in both unconstrained cases and constrained cases via Lyapunov function analogous to the continuous-time cases, and shows that both of them exist $ O(1/k^2) $ convergence rate for function value respect to different feasible point sequence. The cubic convergence rate of square of gradient norm in constrained cases also can be proved, and square of gradient norm like term in constrained cases, i. e. $ \norm{y_{k+1}-x_k}^2 $, converges to 0 with cubic rate. Furthermore, we construct a generalized Lyapunov function to demonstrate both the convergence rate of function values and gradient norms of higher-order mirror descent.

	\end{itemize}
	\subsection{Notations}
	We almost follow the notations of \cite{nesterov05}. Let $ E $ denotes a finite-dimensional real vector spaces, $ E^* $ be the dual space of $ E $. The value of function $ s\in E^*, x\in E $ is denoted as $ \inner{s,x}. $ The norm of $ E $ is denoted as $ \norm{\cdot} $, the inner product of $ E $ is $ \inner{\cdot,\cdot} $, and $ \norm{x}_2 $ is $ \sqrt{\inner{x,x}} $. Let $ \Fone $ be the class of $ L- $smooth convex functions, i. e. $ f\in\Fone $ if and only if $ f $ is a differentiable convex function and $ \nabla f $ is $ L- $Lipschitz continuous. Given a convex function $ f $, $ f^*: E^*\to(-\infty,+\infty] $ denotes conjugate function of $ f $, whose function value at $ s\in E^* $ is given by $ f^*(s)=\sup_{x\in E}\{\inner{s,x}-f(x)\}. $ For a convex function $ \varphi $, the Bergman-divergence of $ \varphi $ is \[
	D_\varphi(y,x)=\varphi(y)-\varphi(x)-\inner{\varphi'(x),y-x},
	\]
	above $ \varphi'(x)\in\partial\varphi(x) $. $ \varphi $ is said to be $ \sigma- $strongly convex if $ D_\varphi(y,x)\geqslant\dfrac{\sigma}{2}\norm{y-x}^2. $ The $ x^*\in C $ represents the minimizer of $ f $ over closed convex set $ C $.
	\section{Mirror Descent}
	\label{sec:2}
	The mirror descent with constant step size is given by the recursive rule\[
	x_{k+1}=\argmin_x\left\{s\inner{\nabla f(x_k),x-x_k}+\frac{D_\varphi(x,x_k)}{\sigma}\right\},
	\]
	where $ D_\varphi(y,x) $ is the Bergman divergence respect to $ \sigma- $strongly convex function $ \varphi $ defined on $ C $. To derive ODEs for mirror descent, we need to obtain an explicit iteration rule like gradient descent rather than the recursive rule given by the solution of another optimization problem. One can use the well known theorem that $ \varphi $ is $ \sigma- $strongly convex if and only if $ \varphi^*\in\Fone[\sigma^{-1}] $, and then transform the above iteration rule into the following algorithm from mirror map view \begin{equation}
		\begin{split}
			z_{k+1}&=z_k-\sigma s\nabla f(x_k),\\
			x_{k+1}&=\nabla\varphi^*(z_{k+1}),
		\end{split}
		\label{eq:mirrormapview}
	\end{equation}
	with initial $ \nabla\varphi^*(z_0)=x_0 $. As $ s\to 0 $, we achieve the continuous limit of \eqref{eq:mirrormapview}:\begin{equation}
		\begin{split}
			\Z&=-\sigma\nabla f(X),\\
			X&=\nabla\varphi^*(Z),\\
			Z(0)&=z_0, X(0)=x_0 \text{ with } \nabla\varphi^*(z_0)=x_0.
		\end{split}
		\label{eq:mirrorode}
	\end{equation}
	Next, we construct Lyapunov functions to analyze the dynamics of trajectories of \eqref{eq:mirrorode}. Enlightened by the Lyapunov functions in \cite{shi19} for gradient flow and the Lyapunov function in \cite{krichene2015} for accelerated mirror descent, we propose the following Lyapunov function for \eqref{eq:mirrorode}\begin{equation}
		\E(t)=t\sigma[f(X(t))-f(x^*)]+D_{\varphi^*}(Z(t),z^*),
		\label{eq:lyapunovmirrorc}
	\end{equation}
	where $ \nabla\varphi^*(z^*)=x^* $. The first term can be regraded as the potential term  and the second term is a mix. The significant of mixed term in \eqref{eq:lyapunovmirrorc} is that the derivative of $ Z $ is equal to $ -\nabla f(X) $. It's worthwhile to mention that the mixed term should not be the Bergman-divergence on $ E $. The main reason is that $ \varphi $ isn't differentiable in general, which make $ D_\varphi(\cdot,\cdot) $ ill-defined. Right now, we can investigate the derivative of $ \E(t) $ that play a central role in characterizing properties of \eqref{eq:mirrorode}.
	\begin{theorem}
		\label{thm:1}
		Let $X(t), Z(t)$ be the solution to \eqref{eq:mirrorode}. If $ f\in\Fone $ and $ \varphi $ is twice continuous differentiable, then
		\[
		\EE(t)\leqslant-t\sigma\|\X\|^2.
		\]
	\end{theorem}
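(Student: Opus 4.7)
The plan is to differentiate $\E(t)$ term by term, cancel the potential-term contribution against part of the mixed-term derivative by convexity of $f$, and then convert the remaining quantity into $-t\sigma\norm{\X}^2$ using the duality between $\sigma$-strong convexity of $\varphi$ and $(1/\sigma)$-smoothness of $\varphi^*$.

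First, differentiating the potential $t\sigma[f(X)-f(x^*)]$ by the product and chain rules yields $\sigma[f(X)-f(x^*)] + t\sigma\inner{\nabla f(X),\X}$. For the mixed term $D_{\varphi^*}(Z,z^*)=\varphi^*(Z)-\varphi^*(z^*)-\inner{\nabla\varphi^*(z^*),Z-z^*}$, I use $\nabla\varphi^*(Z)=X$ and $\nabla\varphi^*(z^*)=x^*$ to compute its time derivative as $\inner{X-x^*,\Z}$, which after substituting $\Z=-\sigma\nabla f(X)$ becomes $-\sigma\inner{X-x^*,\nabla f(X)}$. The convexity inequality $\inner{\nabla f(X),X-x^*}\geqslant f(X)-f(x^*)$ then annihilates the combination $\sigma[f(X)-f(x^*)]-\sigma\inner{X-x^*,\nabla f(X)}$, leaving $\EE(t)\leqslant t\sigma\inner{\nabla f(X),\X}$.

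Second, twice differentiability of $\varphi$ makes $\varphi^*$ twice differentiable as well, so $\X=\nabla^2\varphi^*(Z)\Z=-\sigma\nabla^2\varphi^*(Z)\nabla f(X)$. Consequently $\inner{\nabla f(X),\X}=-\sigma\inner{\nabla f(X),\nabla^2\varphi^*(Z)\nabla f(X)}$ while $\norm{\X}^2=\sigma^2\norm{\nabla^2\varphi^*(Z)\nabla f(X)}^2$. The target inequality therefore reduces to the Hessian estimate $\inner{v,\nabla^2\varphi^*(Z)v}\geqslant\sigma\norm{\nabla^2\varphi^*(Z)v}^2$ with $v=\nabla f(X)$, which I would obtain by differentiating at $z_1=z_2=Z$ the cocoercivity $\inner{\nabla\varphi^*(z_1)-\nabla\varphi^*(z_2),z_1-z_2}\geqslant\sigma\norm{\nabla\varphi^*(z_1)-\nabla\varphi^*(z_2)}^2$, equivalent to the $(1/\sigma)$-smoothness of $\varphi^*$ that was already used to pass to \eqref{eq:mirrormapview}.

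I expect the last step to be the main subtlety: in the possibly non-Euclidean setting of the paper, one must track primal versus dual norms carefully so that the operator inequality produces precisely the factor $\sigma$ needed to close the gap between $\inner{\nabla f(X),\X}$ and $-\norm{\X}^2$. The remaining manipulations—differentiation of the Lyapunov function and application of first-order convexity—are routine bookkeeping that I do not expect to present any real difficulty.
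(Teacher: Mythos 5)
Your proof is correct and follows essentially the same route as the paper: differentiate $\E(t)$, cancel the potential term against the derivative of the Bregman term via convexity of $f$, and close with a second-order bound coming from the $\sigma$-strong convexity of $\varphi$. The only cosmetic difference is in the last step, which the paper phrases in the primal, writing $\Z=\nabla^2\varphi(X)\X$ so that $-t\inner{\Z,\X}=-t\inner{\nabla^2\varphi(X)\X,\X}\leqslant-t\sigma\norm{\X}^2$, whereas you phrase it in the dual via $\X=\nabla^2\varphi^*(Z)\Z$ and cocoercivity of $\nabla\varphi^*$; since $\nabla^2\varphi^*(Z)=[\nabla^2\varphi(X)]^{-1}$, these are the same inequality.
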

	\begin{proof}
		By definition of the Bergman divergence, we have\begin{align*}
			\frac{d}{dt}\mathcal{E}(t)&=\sigma[f(X)-f(x^*)]+t\sigma\inner{\nabla f(X),\X}+\inner{\Z,\nabla\varphi^*(Z)-\nabla\varphi^*(z^*)}\\
			&\leqslant\sigma \inner{\nabla f(X),X-x^*}-t\inner{\Z,\X}-\sigma\inner{\nabla f(X),X-x^*}\\
			&=-t\inner{\nabla^2\varphi(X)\X,\X}\\
			&\leqslant-t\sigma\|\X\|^2.
		\end{align*}
	\end{proof}
	From the above theorem, We can now analysis the convergence rate of trajectories of \eqref{eq:mirrorode}.
	\begin{corollary}
		\label{cor:1}
		Let $X(t), Z(t)$ be the solution to \eqref{eq:mirrorode}. If $ f\in\Fone $ and $ \varphi $ is twice continuous differentiable, then
		\begin{align*}
			f(X(t))-f(x^*)&\leqslant\frac{D_{\varphi^*}(z_0,z^*)}{t\sigma},\\
			\inf_{0\leqslant u\leqslant t}\|\X(u)\|^2&=o\left(\frac{1}{t^2}\right).
		\end{align*}
	\end{corollary}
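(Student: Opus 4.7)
The plan is to leverage Theorem~\ref{thm:1}, which provides $\EE(t)\leqslant -t\sigma\|\X\|^2\leqslant 0$, so the Lyapunov function $\E$ is non-increasing. For the first bound, I would simply evaluate $\E$ at $t=0$: since the potential term $t\sigma[f(X(t))-f(x^*)]$ vanishes there, we get $\E(0)=D_{\varphi^*}(z_0,z^*)$. Combining monotonicity with the non-negativity of $D_{\varphi^*}(Z(t),z^*)$ (which follows from $\varphi^*\in\Fone[\sigma^{-1}]$ being convex) yields $t\sigma[f(X(t))-f(x^*)]\leqslant \E(t)\leqslant \E(0)=D_{\varphi^*}(z_0,z^*)$, and dividing by $t\sigma$ is the desired $O(1/t)$ rate.

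For the second bound, the key observation is that integrating the inequality from Theorem~\ref{thm:1} on $[0,t]$ gives
\[
\int_0^t u\sigma\|\X(u)\|^2\,du\leqslant \E(0)-\E(t)\leqslant D_{\varphi^*}(z_0,z^*),
\]
which shows the weighted integral $\int_0^\infty u\|\X(u)\|^2\,du$ is finite. A naive bound $\inf_{0\leqslant u\leqslant t}\|\X(u)\|^2\cdot \int_0^t u\,du\leqslant\int_0^t u\|\X(u)\|^2\,du$ only gives $O(1/t^2)$, so to obtain the little-$o$ refinement I would split the integral and use the tail. Restricting to $[t/2,t]$,
\[
\inf_{t/2\leqslant u\leqslant t}\|\X(u)\|^2\cdot \frac{3t^2}{8}\leqslant \int_{t/2}^t u\|\X(u)\|^2\,du,
\]
and since the full integral converges, the right-hand side tends to $0$ as $t\to\infty$. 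Hence $\inf_{t/2\leqslant u\leqslant t}\|\X(u)\|^2=o(1/t^2)$, and trivially $\inf_{0\leqslant u\leqslant t}\|\X(u)\|^2\leqslant \inf_{t/2\leqslant u\leqslant t}\|\X(u)\|^2$, closing the argument.

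The main conceptual obstacle is making the little-$o$ step rigorous rather than merely big-$O$: one must exploit that the weighted velocity integral is not just bounded but \emph{convergent}, which is exactly what the tail-splitting provides. A minor technical point is ensuring $D_{\varphi^*}(Z(t),z^*)\geqslant 0$ so that the Lyapunov function bounds the potential; this is immediate from convexity of $\varphi^*$, but should be stated explicitly. Everything else is routine integration and manipulation of the bound from Theorem~\ref{thm:1}.
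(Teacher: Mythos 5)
Your proposal is correct and follows essentially the same route as the paper: monotonicity of $\E$ from Theorem~\ref{thm:1} plus $\E(0)=D_{\varphi^*}(z_0,z^*)$ for the function-value bound, and integration of $\EE(u)\leqslant-u\sigma\|\X(u)\|^2$ over $[t/2,t]$ with $\int_{t/2}^t u\,du=\tfrac{3t^2}{8}$ for the little-$o$ velocity bound. The only cosmetic difference is that you justify the vanishing of the tail via convergence of the weighted integral $\int_0^\infty u\|\X(u)\|^2\,du$, whereas the paper invokes Cauchy's criterion for the convergent quantity $\E(t/2)-\E(t)$; these are the same argument.
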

	\begin{proof}
		By Theorem \ref{thm:1}, we have $ \E $ is a non-increasing function respect to $ t $, which leads to \[
		\sigma [f(X(t))-f(x^*)]\leqslant\E(t)\leqslant\E(0)=D_{\varphi^*}(z_0,z^*).
		\]
		Diving $ \sigma $ on both sides of inequality, we get the convergence rate of function value. 
		
		Inspired by \cite{chen22}, we integrate the inequality in Theorem \ref{thm:1} from $ \frac{t}{2} $ to $ t $, and have\[
		\int_{\frac{t}{2}}^{t}\EE(u)du\leqslant-\sigma\int_{\frac{t}{2}}^{t}u\|\X(u)\|^2du\leqslant-\sigma\inf_{\frac{t}{2}\leqslant u\leqslant t}\|\X(u)\|^2\int_{\frac{t}{2}}^{t}udu=-\frac{3\sigma t^2}{8}\inf_{\frac{t}{2}\leqslant u\leqslant t}\|\X(u)\|^2.
		\]
		Thus\[
		\inf_{0\leqslant u\leqslant t}\|\X(u)\|^2\leqslant\inf_{\frac{t}{2}\leqslant u\leqslant t}\|\X(u)\|^2\leqslant\frac{8}{3\sigma t^2}\left[\E\left(\frac{t}{2}\right)-\E(t)\right].
		\]
		Since the non-negative function $ \E(t) $ is non-increasing on $ (0,+\infty) $, $ \lim\limits_{t\to \infty}\E(t) $ exists. Thus by Cauchy's criteria, we have $ \E\left(\frac{t}{2}\right)-\E(t)=o(1) $. In conclusion, \[
		\inf_{0\leqslant u\leqslant t}\|\X(u)\|^2=o\left(\frac{1}{t^2}\right).
		\]
	\end{proof}
	
	In practice, the conjugate of $ \varphi $ is hard to be calculated, which limits its application to estimate the upper bound of complexity of optimization problem. To overcome such problem, we translate the Bergman-divergence respect to the dual space $ E^* $ into the Bergman-divergence on $ E $ via the following lemma.
	\begin{lemma}
		\label{lem:1}
		If $ \nabla\varphi^*(z_0)=x_0, \nabla\varphi^*(z^*)=x^* $, then \[
		D_{\varphi^*}(z_0,z^*)=D_\varphi(x^*,x_0).
		\]
	\end{lemma}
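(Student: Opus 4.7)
The plan is to expand both Bregman divergences by their definitions and bridge them with the Fenchel--Young equality. First I would write
\[
D_{\varphi^*}(z_0,z^*)=\varphi^*(z_0)-\varphi^*(z^*)-\inner{\nabla\varphi^*(z^*),z_0-z^*}
\]
and substitute $\nabla\varphi^*(z^*)=x^*$ into the inner product, obtaining $\varphi^*(z_0)-\varphi^*(z^*)-\inner{x^*,z_0-z^*}$.

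Next, I would exploit the fact that Fenchel--Young holds with equality precisely at conjugate pairs. Since $\nabla\varphi^*(z_0)=x_0$, the pair $(x_0,z_0)$ attains the supremum in $\varphi^*(z_0)=\sup_{x}\{\inner{z_0,x}-\varphi(x)\}$, giving $\varphi^*(z_0)=\inner{z_0,x_0}-\varphi(x_0)$; similarly $\varphi^*(z^*)=\inner{z^*,x^*}-\varphi(x^*)$. Substituting these two identities and cancelling the $\inner{x^*,z^*}$ terms should collapse the expression to
\[
\varphi(x^*)-\varphi(x_0)-\inner{z_0,x^*-x_0}.
\]

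Finally, to recognize the last line as $D_\varphi(x^*,x_0)$, I need to identify $z_0$ with an element of $\partial\varphi(x_0)$. The relation $\nabla\varphi^*(z_0)=x_0$ is equivalent (by the Fenchel--Moreau theorem applied to the closed, strongly convex $\varphi$) to $z_0\in\partial\varphi(x_0)$, so we may take $\varphi'(x_0)=z_0$ in the Bregman divergence defined on the primal side. This legitimate subgradient choice finishes the identification.

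The main obstacle is not computational but notational: the paper explicitly allows non-differentiable $\varphi$, so one must be careful that $D_\varphi(\cdot,\cdot)$ depends on the selected subgradient and that the correct selection $\varphi'(x_0)=z_0$ is available; this is precisely what the inverse-subdifferential correspondence supplies. Everything else is routine algebraic cancellation.
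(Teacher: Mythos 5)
Your proposal is correct and follows essentially the same route as the paper: both expand $D_{\varphi^*}(z_0,z^*)$, invoke the Fenchel--Young equality at conjugate pairs (the paper cites Theorem~23.5 of Rockafellar for exactly this), and collapse the result to $\varphi(x^*)-\varphi(x_0)-\inner{z_0,x^*-x_0}=D_\varphi(x^*,x_0)$. Your explicit remark that one must select the subgradient $\varphi'(x_0)=z_0$ via the inverse-subdifferential correspondence is a point the paper leaves implicit, but it does not change the argument.
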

	\begin{proof}
		Due to Theorem 23.5 in \cite{rockafellar70}, under the assumption of Lemma \ref{lem:1}, we have\begin{align*}
			\varphi(x_0)+\varphi^*(z_0)=\inner{z_0,x_0},\\
			\varphi(x^*)+\varphi^*(z^*)=\inner{z^*,x^*}.
		\end{align*}
		Thus\begin{align*}
			D_{\varphi^*}(z_0,z^*)&=\varphi(x^*)-\varphi(x_0)-\inner{z_0-z^*,x^*}+\inner{z_0,x_0}-\inner{z^*,x^*}\\
			&=\varphi(x^*)-\varphi(x_0)-\inner{z_0,x^*-x_0}\\
			&=D_\varphi(x^*,x_0).
		\end{align*}
	\end{proof}
	With such observation, we have
	\begin{corollary}
		\label{cor:2}
		Let $X(t), Z(t)$ be the solutions to \eqref{eq:mirrorode}. If $ f\in\Fone $ and $ \varphi $ is twice continuous differentiable, then
		\begin{align*}
			f(X(t))-f(x^*)&\leqslant\frac{D_\varphi(x^*,x_0)}{t\sigma}.
		\end{align*}
	\end{corollary}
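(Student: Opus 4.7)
The plan is to combine the two results that have just been established: the function-value bound from Corollary \ref{cor:1} and the duality identity from Lemma \ref{lem:1}. Corollary \ref{cor:1} already gives
\[
f(X(t)) - f(x^*) \leqslant \frac{D_{\varphi^*}(z_0, z^*)}{t\sigma},
\]
where $z^*$ is any element of $\partial \varphi(x^*)$ satisfying $\nabla \varphi^*(z^*) = x^*$, and $z_0$ is the chosen initial dual point with $\nabla \varphi^*(z_0) = x_0$. Since these are precisely the hypotheses under which Lemma \ref{lem:1} produces the identity $D_{\varphi^*}(z_0, z^*) = D_\varphi(x^*, x_0)$, substituting the latter into the right-hand side of the above inequality yields the claim.

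Concretely, the only step is to note that the hypotheses of the present corollary and of Lemma \ref{lem:1} are compatible: the initial condition for \eqref{eq:mirrorode} prescribes $\nabla \varphi^*(z_0) = x_0$, and the pair $(z^*, x^*)$ used in defining the Lyapunov function \eqref{eq:lyapunovmirrorc} satisfies $\nabla \varphi^*(z^*) = x^*$ by construction. So Lemma \ref{lem:1} applies verbatim.

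There is essentially no obstacle here; the conceptual content has already been paid for in two places, namely in the Lyapunov-function argument of Theorem \ref{thm:1}/Corollary \ref{cor:1} (which produces the dual-space bound) and in the Fenchel conjugacy identity used in Lemma \ref{lem:1} (which converts $D_{\varphi^*}$ on $E^*$ into $D_\varphi$ on $E$). The value of the corollary lies purely in the restatement: in practice $\varphi^*$ is typically unavailable in closed form, whereas $D_\varphi(x^*, x_0)$ can be evaluated directly from the chosen mirror map and the problem data, making the bound immediately usable for complexity estimates.
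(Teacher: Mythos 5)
Your proof is correct and matches the paper's intended argument exactly: the paper states Corollary \ref{cor:2} immediately after Lemma \ref{lem:1} with the remark ``With such observation, we have,'' meaning precisely the substitution of $D_{\varphi^*}(z_0,z^*)=D_\varphi(x^*,x_0)$ into the bound of Corollary \ref{cor:1}. Your check that the hypotheses $\nabla\varphi^*(z_0)=x_0$ and $\nabla\varphi^*(z^*)=x^*$ make Lemma \ref{lem:1} applicable is exactly the (only) point that needed verifying.
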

	
	Now we translate the continuous-time Lyapunov function into discrete-time Lyapunov function to analysis the mirror descent. Motivated by \eqref{eq:lyapunovmirrorc}, we consider the following discrete-time Lyapunov function:\begin{equation}
		\E(k)\overset{\Delta}{=}\E(ks)=ks\sigma[f(x_k)-f(x^*)]+D_{\varphi^*}(z_k,z^*).
		\label{eq:lyapunovmirrord}
	\end{equation}
	Analogous to the continuous-time Lyapunov functions, the difference of discrete-time Lyapunov functions $ \E(k+1)-\E(k) $ is also importance to investigate convergence results of mirror descent. Similar to continuous-time cases, the upper bound of $ \E(k+1)-\E(k) $ is dominated by $ -\norm{x_{k+1}-x_k}^2 $. With such observation, we can conclude the following theorem.
	\begin{theorem}
		\label{thm:2}
		Let $ f\in\mathscr{F}^{1,1}_L, \varphi $ is $ \sigma- $strongly convex. If the step-size $ s\in[0,\dfrac{2}{L}) $, then the sequences $ \{z_k\}, \{x_k\} $ generated by \eqref{eq:mirrormapview} satisfies\begin{align*}
			f(x_k)-f(x^*)\leqslant\frac{D_\varphi(x^*,x_0)}{k\sigma s},\\
			\min_{0\leqslant i\leqslant k}\norm{x_{i+1}-x_i}^2=o\left(\frac{1}{k^2}\right).
		\end{align*}
	\end{theorem}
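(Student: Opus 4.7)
The plan is to transport the continuous-time argument of Theorem \ref{thm:1} and Corollary \ref{cor:1} into the discrete setting, using the already-defined Lyapunov function $\mathcal{E}(k) = ks\sigma[f(x_k) - f(x^*)] + D_{\varphi^*}(z_k, z^*)$ from \eqref{eq:lyapunovmirrord}. Its initial value $\mathcal{E}(0) = D_{\varphi^*}(z_0, z^*) = D_\varphi(x^*, x_0)$ comes from Lemma \ref{lem:1}, so both conclusions will follow once I establish a discrete analogue of $\EE(t) \leq -t\sigma\|\X\|^2$, namely
\[ \mathcal{E}(k+1) - \mathcal{E}(k) \leq -C_k\|x_{k+1} - x_k\|^2 \]
with $C_k$ growing linearly in $k$ and nonnegative for $s \in [0, 2/L)$.

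To derive this inequality I would split the increment into $s\sigma[f(x_{k+1}) - f(x^*)] + ks\sigma[f(x_{k+1}) - f(x_k)] + \bigl[D_{\varphi^*}(z_{k+1},z^*) - D_{\varphi^*}(z_k,z^*)\bigr]$ and bound each piece. The three-point identity rewrites the Bregman difference as $-D_{\varphi^*}(z_k, z_{k+1}) - \sigma s\langle \nabla f(x_k), x_{k+1} - x^*\rangle$; convexity of $f$ applied to the $x_k - x^*$ portion together with the $L$-smoothness descent lemma applied to the $x_{k+1} - x_k$ portion cancel this against the $s\sigma[f(x_{k+1})-f(x^*)]$ term, while the $1/\sigma$-smoothness of $\varphi^*$ (i.e., $\sigma$-strong convexity of $\varphi$) supplies $D_{\varphi^*}(z_k, z_{k+1}) \geq \frac{\sigma}{2}\|x_{k+1} - x_k\|^2$. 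For the middle piece, co-coercivity $\langle x_{k+1} - x_k, z_{k+1} - z_k\rangle \geq \sigma\|x_{k+1} - x_k\|^2$ yields $\langle \nabla f(x_k), x_{k+1} - x_k\rangle \leq -\frac{1}{s}\|x_{k+1} - x_k\|^2$, and combining with the descent lemma gives $f(x_{k+1}) - f(x_k) \leq -\frac{2-sL}{2s}\|x_{k+1} - x_k\|^2$, the critical inequality that forces the step-size restriction $s < 2/L$. Collecting pieces produces $C_k = \Theta(k)$.

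The first rate is then immediate by telescoping: $ks\sigma[f(x_k) - f(x^*)] \leq \mathcal{E}(k) \leq \mathcal{E}(0) = D_\varphi(x^*, x_0)$. For the $o(1/k^2)$ rate I would mimic the $t/2$-to-$t$ integration step of Corollary \ref{cor:1}: summing the descent inequality from $\lceil k/2 \rceil$ to $k-1$ gives $\min_{k/2 \leq i < k}\|x_{i+1} - x_i\|^2 \leq O(k^{-2})\bigl[\mathcal{E}(\lceil k/2\rceil) - \mathcal{E}(k)\bigr]$, and since $\mathcal{E}$ is nonnegative and eventually nonincreasing its limit exists, so by Cauchy's criterion the bracket is $o(1)$. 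The main obstacle is the coefficient bookkeeping in the descent-inequality derivation: the tempting shortcut $D_{\varphi^*}(z_{k+1}, z_k) \leq \frac{\sigma s^2}{2}\|\nabla f(x_k)\|^2$ only secures decrease for $s < 1/L$ because the companion bound $\|x_{k+1} - x_k\| \leq s\|\nabla f(x_k)\|$ runs in the wrong direction to reabsorb the gradient norm, so reaching the full range $s < 2/L$ forces the more careful route through the three-point identity and descent lemma sketched above.
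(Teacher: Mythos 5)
Your proposal retraces the paper's own proof almost step for step: the same Lyapunov function $\E(k)=ks\sigma[f(x_k)-f(x^*)]+D_{\varphi^*}(z_k,z^*)$, the same three ingredients (descent lemma on $f$, convexity at $x_k-x^*$, and the $\sigma^{-1}$-smoothness/co-coercivity of $\nabla\varphi^*$), and the same tail argument (sum from $\lfloor k/2\rfloor$ to $k$, Cauchy's criterion). Your splitting $s\sigma[f(x_{k+1})-f(x^*)]+ks\sigma[f(x_{k+1})-f(x_k)]+\Delta D_{\varphi^*}$ versus the paper's $(k+1)s\sigma[f(x_{k+1})-f(x_k)]+s\sigma[f(x_k)-f(x^*)]+\Delta D_{\varphi^*}$, and your use of the three-point identity versus the paper's direct expansion of $\varphi^*(z_{k+1})-\varphi^*(z_k)$, are cosmetic: collecting your pieces gives $C_k=\frac{k\sigma(2-Ls)}{2}+\frac{\sigma(1-Ls)}{2}=\frac{\sigma}{2}\left[(2k+1)-(k+1)Ls\right]$, which is exactly the paper's coefficient $\left(-k-\frac12+\frac{(k+1)Ls}{2}\right)\sigma$ with the sign flipped.

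The one place you claim to go beyond the paper is the gap. You assert that the careful route makes $C_k$ nonnegative on the full range $s\in[0,2/L)$, but $C_k\geqslant 0$ only when $Ls\leqslant\frac{2k+1}{k+1}$; at $k=0$ this forces $Ls\leqslant 1$, and indeed $C_0=\frac{\sigma(1-Ls)}{2}<0$ for $s\in(1/L,2/L)$, with $C_k<0$ persisting for all $k<\frac{Ls-1}{2-Ls}$. Since the first conclusion rests on the telescoped bound $ks\sigma[f(x_k)-f(x^*)]\leqslant\E(k)\leqslant\E(0)=D_\varphi(x^*,x_0)$, which needs every increment from $k=0$ onward to be nonpositive, the stated constant is not obtained for $s\in(1/L,2/L)$; the early positive increments cannot be reabsorbed, because the only available upper bound on the boundary piece is $-\frac{\sigma(1-Ls)}{2}\norm{x_{k+1}-x_k}^2$. (Your $o(1/k^2)$ claim survives, since there only eventual monotonicity plus nonnegativity of $\E$ are needed.) To be fair, the paper has the identical defect: its Step 2 explicitly retreats to $s\in[0,1/L]$, so the theorem's advertised range $[0,2/L)$ is not established there either. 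You have reproduced the argument faithfully but overstated what the bookkeeping delivers; the honest fix is to restrict to $s\leqslant 1/L$, or to accept a degraded constant (replacing $\E(0)$ by $\E(k_0)$ for the first index $k_0$ at which $C_k$ turns nonnegative) when $s\in(1/L,2/L)$.
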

	The details of proof can be found in \ref{sec:thm2}. As we can see, Theorem \ref{thm:2} is the discrete version of Theorem \ref{thm:1} with time $ t_k=ks $ expect the assumptions they required.
	\section{Accelerated Mirror Descent}
	\label{sec:3}
	\subsection{High-resolution ODE for accelerated mirror descent}
	The accelerated mirror descent proposed in \cite{nesterov05} with fixed step-size $ s $ is given by the recursive rule
	\[
	\begin{split}
		y_{k+1}&=\argmin_y\left\{s\inner{\nabla f(x_k),y-x_k}+\frac{1}{2}\norm{x-x_k}^2\middle|y\in C\right\},\\
		z_{k+1}&=\argmin_z\left\{\frac{1}{\sigma s}\varphi(z)+\sum_{i=0}^{k}\frac{i+1}{2}[f(x_i)+\inner{\nabla f(x_i),z-x_i}]\middle|z\in C\right\},\\
		x_{k+1}&=\frac{2}{k+3}z_{k+1}+\frac{k+1}{k+3}y_{k+1}.
	\end{split}
	\]
	To build up the ODEs for it, we need to derive explicit solutions of optimization algorithms appear above under additional conditions. The solution of gradient map can be solve explicit if $ C=E $ and the norm is Euclidean norm. Under these assumption, we have\begin{align*}
		z_{k+1}&=\argmin_z\left\{\frac{1}{\sigma s}\varphi(z)+\sum_{i=0}^{k}\frac{i+1}{2}[f(x_i)+\inner{\nabla f(x_i),z-x_i}]\right\},\\
		x_{k+1}&=\frac{2}{k+3}z_{k+1}+\frac{k+1}{k+3}[x_k-s\nabla f(x_k)].
	\end{align*}
	Next we use the mirror-map to derive the explicit formula for $ z_{k+1} $. The first-order optimal condition for $ z_{k+1} $ is\[
	-\sigma s\sum_{i=0}^{k}\frac{i+1}{2}\nabla f(x_i)\in\partial\varphi(z_{k+1}).
	\] 
	Let $ \tilde{z}_{k+1}=-\sigma s\sum_{i=0}^k\frac{i+1}{2}\nabla f(x_i) $. The above relation can be rewritten as $ \tilde{z}_{k+1}\in\partial\varphi(z_{k+1}) $, or $ z_{k+1}=\nabla\varphi^*(\tilde{z}_{k+1}) $ owing to that $ \varphi^* $ is differentiable and its derivative is the inverse of $ \partial\varphi $. Noticing that $ \tilde{z}_{k+1}-\tilde{z}_k=-\sigma s\nabla f(x_k) $, we have the following reformed iteration rule.
	\begin{equation}
		\label{eq:unconstrained}
		\begin{split}
			\tilde{z}_{k+1}-\tilde{z}_k&=-\sigma s\frac{k+1}{2}\nabla f(x_k),\\
			(k+3)x_{k+1}&=2\nabla\varphi^*(\tilde{z}_{k+1})+(k+1)[x_k-s\nabla f(x_k)].
		\end{split}
	\end{equation}
	
	Replace $ \tilde{z}_{k+1} $ by $ z_{k+1} $, and slightly modify the iteration rule, we have\begin{equation}
		\begin{split}
			\frac{z_{k+1}-z_k}{\sqrt{s}}&=-\sigma\frac{(k+1)\sqrt{s}}{2}\nabla f(x_k),\\
			(k+1)\sqrt{s}\frac{x_{k+1}-x_k}{\sqrt{s}}&=2\nabla\varphi^*(z_{k+1})-2x_{k+1}-[(k+1)\sqrt{s}]\sqrt{s}\nabla f(x_k).
		\end{split}
		\label{eq:acceleratedmirrorodeform}
	\end{equation}
	\eqref{eq:acceleratedmirrorodeform} can be seen as applying forward-backward Euler scheme to the high-resolution ODE:\begin{equation}
		\begin{split}
			\Z&=-\frac{t\sigma}{2}\nabla f(X),\\
			\X&=\frac{2}{t}\nabla\varphi^*(Z)-\frac{2}{t}X-\underbrace{\sqrt{s}\nabla f(X)}_{\text{gradient correction term}},\\
			X(0)&=x_0, Z(0)=z_0 \text{ with } \nabla\varphi^*(z_0)=x_0, \X(0)=0,
		\end{split}
		\label{eq:acceleratedmirrorode}
	\end{equation}
	with time $ t_k=(k+1)\sqrt{s} $ and forward step to gradient correction term. Before taking a deep look at the behavior of \eqref{eq:acceleratedmirrorode}, we discuss the existence and uniqueness of the solution to \eqref{eq:acceleratedmirrorode}. In the previous work \cite{krichene2015}, they prove the existence and uniqueness of the solution to low-resolution ODEs by considering the equi-Lipschitz-continuous and uniformly bounded family  of solution $ (X_\delta, Z_\delta)|_{[0,t_0]} $ to \[
	\begin{split}
		\Z&=-\frac{t}{r}\nabla f(X),\\
		\X&=\frac{r}{\max\{t,\delta\}}(\nabla\varphi^*(Z)-X),\\
		X(0)&=x_0, Z(0)=z_0 \text{ with } \nabla\varphi^*(z_0)=x_0, \X(0)=0.
	\end{split}
	\]
	Due to Arzel\`{a}-Ascoli theorem, as $ \delta\to 0 $, the solution of low-resolution is obtained. In high-resolution ODEs case, we only introduce the term $ \sqrt{s}\nabla f(X) $, which is Lipschitz continuous respect to $ X $ and ensure the existence of $ (X_\delta, Z_\delta) $. The rest of the proof is analogous to the previous discussion. 
	
	Now we establish the convergence rate of \eqref{eq:acceleratedmirrorode}. First, we consider the Lyapunov function\begin{equation}
		\E(t)=t^2\sigma[f(X)-f(x^*)]+4D_{\varphi^*}(Z,z^*).
		\label{eq:lyapunovmirrorac}
	\end{equation}
	Before estimate the differentiability of $ \E(t) $, we first remark that we prefer the following ODEs form
	\begin{equation}
		\begin{split}
			\Z&=-\frac{t}{2}\nabla f(X),\\
			\nabla\varphi^*(Z)&=\frac{t}{2}\X+\frac{t}{2}\sqrt{s}\nabla f(X)+X,
		\end{split}
		\label{eq:acceleratedmirrorode2}
	\end{equation}
	because the information of $ \nabla\varphi^*(Z) $ plays a central role in estimate the upper bound of $ \EE(t) $. The upper bound of $\E(t)$ is given as followed.
	\begin{theorem}
		\label{thm:3}
		Let $ X(t), Z(t) $ be the solutions to \eqref{eq:acceleratedmirrorode}, then\[
		\EE(t)\leqslant-{t^2\sigma}\norm{\nabla f(X)}^2_2.
		\]
	\end{theorem}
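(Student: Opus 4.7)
The plan is to differentiate $\E(t)$ directly and then use both parts of the high-resolution ODE to annihilate the cross-term $\inner{\nabla f(X),\X}$ that naturally appears. By the product rule, together with the identity $\frac{d}{dt}D_{\varphi^*}(Z(t),z^*) = \inner{\Z,\nabla\varphi^*(Z)-\nabla\varphi^*(z^*)}$ and $\nabla\varphi^*(z^*)=x^*$,
\[
\EE(t) = 2t\sigma[f(X)-f(x^*)] + t^2\sigma\inner{\nabla f(X),\X} + 4\inner{\Z,\nabla\varphi^*(Z)-x^*}.
\]
The identity for $\frac{d}{dt}D_{\varphi^*}$ is exactly why the paper insists on placing the ``mixed'' Bergman divergence on the dual space: it converts the velocity of the mirror variable into a quantity that can be substituted from the ODE.

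The crucial algebraic step is to use \emph{both} equations of \eqref{eq:acceleratedmirrorode2} in the last summand. The first equation replaces $\Z$ by a multiple of $\nabla f(X)$; the rearranged second equation supplies the explicit formula $\nabla\varphi^*(Z)-x^* = \tfrac{t}{2}\X + \tfrac{t}{2}\sqrt{s}\nabla f(X) + (X-x^*)$. Plugging these in produces three summands, proportional respectively to $\inner{\nabla f(X),\X}$, $\norm{\nabla f(X)}_2^2$, and $\inner{\nabla f(X),X-x^*}$. The coefficient $4$ in front of $D_{\varphi^*}$ in \eqref{eq:lyapunovmirrorac} and the factor $\tfrac{t}{2}$ in the rearranged ODE are calibrated precisely so that the $\inner{\nabla f(X),\X}$ contribution cancels the matching one already sitting in $\EE(t)$; this cancellation is the whole reason for preferring the form \eqref{eq:acceleratedmirrorode2} rather than working with $\X$ directly.

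After cancellation the expression collapses to the shape
\[
\EE(t) = 2t\sigma\bigl[f(X)-f(x^*) - \inner{\nabla f(X),X-x^*}\bigr] - C\,t^2\norm{\nabla f(X)}_2^2,
\]
with an explicit positive constant $C$ produced by the gradient-correction term $\sqrt{s}\nabla f(X)$. The bracket is non-positive by the supporting-hyperplane inequality $f(x^*)\geqslant f(X)+\inner{\nabla f(X),x^*-X}$ for the convex function $f$, so discarding it yields the stated bound. The main obstacle is therefore not a deep convex-analytic estimate---only the most elementary convexity inequality enters---but rather careful bookkeeping of coefficients to ensure that the $\inner{\nabla f(X),\X}$ and $\inner{\nabla f(X),X-x^*}$ cross-terms line up. The residual gradient-norm term is the footprint of the gradient correction $\sqrt{s}\nabla f(X)$ that distinguishes the high-resolution ODE from the low-resolution one, and is precisely the ingredient that will drive the later cubic rate on the squared gradient norm.
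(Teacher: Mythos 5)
Your proposal is correct and follows essentially the same route as the paper: differentiate $\E$, use $\frac{d}{dt}D_{\varphi^*}(Z,z^*)=\inner{\Z,\nabla\varphi^*(Z)-\nabla\varphi^*(z^*)}$, substitute both equations of \eqref{eq:acceleratedmirrorode2} so that the $\inner{\nabla f(X),\X}$ cross-terms cancel, and discard the non-positive convexity bracket $f(X)-f(x^*)-\inner{\nabla f(X),X-x^*}$. The one item to settle in the bookkeeping is your unspecified constant $C$: tracing the gradient-correction term $\frac{t}{2}\sqrt{s}\nabla f(X)$ literally yields $C=\sigma\sqrt{s}$ rather than the bare $\sigma$ stated in the theorem (the paper's own displayed computation silently drops the $\sqrt{s}$ in its final bracket), so leaving $C$ generic is in fact the more honest form of the estimate.
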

	\begin{proof}
		The upper bound of derivative of \eqref{eq:lyapunovmirrorac} can be estimated by the following inequalities.\begin{align*}
			\EE(t)=&2t\sigma[f(X)-f(x^*)]+t^2\sigma\inner{\nabla f(X),\X}+4\inner{\Z,\nabla\varphi^*(Z)-\nabla\varphi^*(z^*)}\\
			\leqslant&2t\sigma\inner{\nabla f(X),X-x^*}+{t^2\sigma}\inner{\nabla f(X),\X}-\sigma\inner{t\nabla f(X),t\X+2X-2x^*+t\nabla f(X)}\\
			=&-t^2\sigma\norm{\nabla f(X)}^2_2.
		\end{align*}
	\end{proof}
	With Theorem \ref{thm:3}, we can characterize properties of trajectories of \eqref{eq:acceleratedmirrorode2} by using technique similar to the proof of corollary \ref{cor:1}.
	\begin{corollary}
		\label{cor:3}
		Let $ X(t), Z(t) $ be the solutions to \eqref{eq:acceleratedmirrorode}, then\begin{align*}
			f(X(t))-f(x^*)&\leqslant\frac{4D_\varphi(x^*,x_0)}{t^2\sigma},\\
			\inf_{0\leqslant u\leqslant t}\norm{\nabla f(X(u))}_2^2&=o\left(\frac{1}{t^3}\right).
		\end{align*}
	\end{corollary}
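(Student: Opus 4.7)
The plan is to adapt the two-step template used for Corollary~\ref{cor:1} essentially verbatim, now with potential weight $t^2$ in place of $t$ and with the improved dissipation rate $-t^2\sigma\|\nabla f(X)\|_2^2$ supplied by Theorem~\ref{thm:3}. The only additional ingredient is Lemma~\ref{lem:1}, which lets me translate the dual Bregman divergence that appears in $\E(0)$ back into a primal quantity.

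For the function-value bound, Theorem~\ref{thm:3} gives $\EE(t)\leq 0$, so $\E(t)$ is non-increasing. The potential term $t^2\sigma[f(X)-f(x^*)]$ vanishes at $t=0$, hence $\E(0)=4D_{\varphi^*}(z_0,z^*)$. Using the initial condition $\nabla\varphi^*(z_0)=x_0$ together with Lemma~\ref{lem:1}, this equals $4D_\varphi(x^*,x_0)$. Since $D_{\varphi^*}(Z(t),z^*)\geq 0$, I can drop it from $\E(t)$ and divide by $t^2\sigma$ to obtain the first inequality.

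For the cubic rate on gradient norms, I would integrate $\EE(u)\leq -u^2\sigma\|\nabla f(X(u))\|_2^2$ over $[t/2,t]$, pull the infimum of the squared gradient norm out of the integral, and evaluate $\int_{t/2}^{t}u^2\,du=7t^3/24$. Rearranging, this yields
\begin{equation*}
\inf_{0\leq u\leq t}\|\nabla f(X(u))\|_2^2 \leq \inf_{t/2\leq u\leq t}\|\nabla f(X(u))\|_2^2 \leq \frac{24}{7\sigma t^3}\bigl[\E(t/2)-\E(t)\bigr].
\end{equation*}
Because $\E$ is non-negative and non-increasing, $\lim_{t\to\infty}\E(t)$ exists, and Cauchy's criterion forces $\E(t/2)-\E(t)=o(1)$, giving the $o(1/t^3)$ rate.

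I expect no substantial obstacle: the argument is structurally identical to Corollary~\ref{cor:1}. The two items requiring routine verification are the non-negativity of $\E(t)$ (which follows from $f(X(t))\geq f(x^*)$ and the convexity of $\varphi^*$, so that $D_{\varphi^*}\geq 0$) and the constant bookkeeping in the interval integral, where the extra factor of $u$ relative to Corollary~\ref{cor:1} is exactly what upgrades the gradient-norm rate from $o(1/t^2)$ to $o(1/t^3)$.
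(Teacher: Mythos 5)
Your proposal is correct and follows essentially the same route as the paper: monotonicity of $\E$ from Theorem \ref{thm:3} plus Lemma \ref{lem:1} for the function-value bound, then integration of the dissipation inequality over $[t/2,t]$ with $\int_{t/2}^{t}u^2\,du=7t^3/24$ and Cauchy's criterion for the $o(1/t^3)$ gradient rate. The constants and all intermediate steps match the paper's proof.
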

	\begin{proof}
		Since $ \E(t) $ is non-increasing, we have\[
		t^2\sigma[f(X(t))-f(x^*)]\leqslant\E(t)\leqslant\E(0)=4D_{\varphi^*}(z_0,z^*)=4D_\varphi(x^*,x_0).
		\]
		The last equality is due to Lemma \ref{lem:1}. Thus we reach the convergence rate of function value. For the convergence rate of gradient norm, we integrate the inequality in Theorem \ref{thm:3} for $ \frac{t}{2} $ to $ t $, and get\[
		\int_{\frac{t}{2}}^{t}\EE(u)du\leqslant-\int_{\frac{t}{2}}^{t}t^2\sigma\norm{\nabla f(X(u))}^2_2du\leqslant-\frac{7t^3\sigma}{24}\inf_{\frac{t}{2}\leqslant u\leqslant t}\norm{\nabla f(X(u))}_2^2.
		\]
		Or\[
		\inf_{\frac{t}{2}\leqslant u\leqslant t}\norm{\nabla f(X(u))}^2_2\leqslant\frac{24}{7t^3\sigma}\left[\E\left(\frac{t}{2}\right)-\E(t)\right]
		\]
		Since the non-negative value function $ \E(t) $ is non-increasing on $ (0,+\infty) $, $ \lim\limits_{t\to\infty}\E(t) $ exists. Thus by Cauchy's criteria, we have $ \E\left(\frac{t}{2}\right)-\E(t)=o(1) $, thus\[
		\inf_{0\leqslant u\leqslant t}\norm{\nabla f(X(u))}^2_2\leqslant\inf_{\frac{t}{2}\leqslant u\leqslant t}\norm{\nabla f(X(u))}^2_2=o\left(\frac{1}{t^3}\right).
		\]
	\end{proof}
	\subsection{The discrete cases with $ C=E $ and Euclidean norm}
	This section is devoted to analyze the convergence rates of \eqref{eq:acceleratedmirror} under the assumption that $ C=E $ equipped with Euclidean norm, i. e. the recursive rule described in \eqref{eq:unconstrained}. It can be seen as the non-Euclidean extension of Nesterov's accelerated gradient method, which inspired us to generalize the Lyapunov function in \cite{shi21} for NAG-C with more refined convergence results. First, we construct the following Lyapunov function\begin{equation}
		\E(k)=(k+1)(k+2)\sigma s[f(x_k)-f(x^*)]+4D_{\varphi^*}(z_{k+1},z^*).
	\end{equation}
	The upper bound of the difference $ \E(k+1)-\E(k) $ is estimated by the following theorem.
	\begin{theorem}
		\label{thm:4}
		Let the sequences $ \{x_k\}, \{z_k\} $ generated by the recursive rule\begin{align*}
			z_{k+1}&=z_k-\frac{(k+1)\sigma s}{2}\nabla f(x_k),\\
			\nabla\varphi^*(z_{k+1})&=\frac{1}{2}\left[(k+3)x_{k+1}-(k+1)x_k+(k+1)s\nabla f(x_k)\right].
		\end{align*}
		If $ f\in\Fone $, $ \varphi $ is $ \sigma- $strongly convex function on $ \mathbb{R}^n $ and $ 0<s\leqslant\frac{1}{L} $, then\[
		\E(k+1)-\E(k)\leqslant-\frac{(k+1)(k+2)\sigma s^2}{2}\norm{\nabla f(x_k)}^2.
		\]
	\end{theorem}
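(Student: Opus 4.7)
The plan is to mirror, step by step, the continuous-time argument of Theorem \ref{thm:3}: the descent lemma will play the role of the derivative of $f\circ X(t)$, the three-point identity for Bregman divergence the role of the chain rule on $D_{\varphi^*}(Z(t),z^*)$, and the iteration's explicit formula for $\nabla\varphi^*(z_{k+1})$ the role of the ODE's closed-form expression for $\nabla\varphi^*(Z)$. First, I would split the Lyapunov difference into three pieces,
\begin{align*}
\E(k+1)-\E(k) &= (k+2)(k+3)\sigma s[f(x_{k+1})-f(x_k)] + 2(k+2)\sigma s[f(x_k)-f(x^*)]\\
&\quad + 4[D_{\varphi^*}(z_{k+2},z^*)-D_{\varphi^*}(z_{k+1},z^*)],
\end{align*}
which is the discrete counterpart of expanding $\EE(t)$ into $2t\sigma[f(X)-f(x^*)] + t^2\sigma\inner{\nabla f(X),\X} + 4\inner{\Z,\nabla\varphi^*(Z)-x^*}$.

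Each piece is then bounded by a standard convex-analytic inequality. The descent lemma---valid because $f\in\Fone$ and $s\leqslant 1/L$---gives $f(x_{k+1})-f(x_k)\leqslant \inner{\nabla f(x_k), x_{k+1}-x_k}+\frac{1}{2s}\norm{x_{k+1}-x_k}^2$; the subgradient inequality $f(x_k)-f(x^*)\leqslant\inner{\nabla f(x_k),x_k-x^*}$ handles the middle term; and the three-point identity
\[D_{\varphi^*}(z_{k+2},z^*)-D_{\varphi^*}(z_{k+1},z^*) = D_{\varphi^*}(z_{k+2},z_{k+1})+\inner{z_{k+2}-z_{k+1},\nabla\varphi^*(z_{k+1})-x^*},\]
combined with the $\frac{1}{\sigma}$-smoothness of $\varphi^*$ (Fenchel dual of $\sigma$-strong convexity of $\varphi$), bounds the Bregman increment. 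The $z$-update $z_{k+2}-z_{k+1}=-\frac{(k+2)\sigma s}{2}\nabla f(x_{k+1})$ converts the quadratic slack $\norm{z_{k+2}-z_{k+1}}^2$ into a multiple of $\norm{\nabla f}^2$.

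The decisive substitution---playing the same role as $\nabla\varphi^*(Z)=\frac{t}{2}\X+X+\frac{t}{2}\nabla f(X)$ in the continuous proof---is the iteration identity
\[2[\nabla\varphi^*(z_{k+1})-x^*] = (k+3)(x_{k+1}-x^*)-(k+1)(x_k-x^*)+(k+1)s\,\nabla f(x_k),\]
read off directly from the second line of the recursion. Plugging this into the linear Bregman inner product and expanding, the cross-terms are arranged to cancel the $\inner{\nabla f(x_k), x_{k+1}-x_k}$ contribution from the descent lemma and the $\inner{\nabla f(x_k), x_k-x^*}$ contribution from convexity, leaving the desired quadratic $-\frac{(k+1)(k+2)\sigma s^2}{2}\norm{\nabla f(x_k)}^2$; the leftover slack $[\frac{L}{2}-\frac{1}{2s}]\norm{x_{k+1}-x_k}^2\leqslant 0$ is absorbed by the step-size condition.

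The main obstacle is precisely this cancellation: the $z$-update naturally carries $\nabla f(x_{k+1})$, while the function-value side and the explicit form of $\nabla\varphi^*(z_{k+1})$ carry $\nabla f(x_k)$, so the matching must exploit monotonicity/cocoercivity of $\nabla f$ (a second place where $f\in\Fone$ and $s\leqslant 1/L$ enter) to reconcile the two indices and leave only $\norm{\nabla f(x_k)}^2$ surviving. Once the arithmetic of the $(k+1),(k+2),(k+3)$ coefficients is carefully tracked, the conclusion follows by routine calculation paralleling Theorem \ref{thm:3}.
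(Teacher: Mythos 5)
Your overall architecture (three-way split of the Lyapunov difference, descent lemma, convexity, smoothness of $\varphi^*$, and substitution of the explicit formula for $\nabla\varphi^*(z_{k+1})$) matches the paper's, but the way you anchor the inequalities leaves a term that cannot be absorbed --- the very ``main obstacle'' you name at the end is a genuine gap, not a routine reconciliation. In your setup the function-value pieces are bounded with gradients at $x_k$, while the Bregman increment, via $z_{k+2}-z_{k+1}=-\frac{(k+2)\sigma s}{2}\nabla f(x_{k+1})$, carries $\nabla f(x_{k+1})$. Carrying out your substitution (using $(k+3)(x_{k+1}-x_k)+2(x_k-x^*)=2[\nabla\varphi^*(z_{k+1})-x^*]-(k+1)s\nabla f(x_k)$), the linear terms collapse to
\[
2(k+2)\sigma s\inner{\nabla f(x_k)-\nabla f(x_{k+1}),\,\nabla\varphi^*(z_{k+1})-x^*}-(k+1)(k+2)\sigma s^2\norm{\nabla f(x_k)}^2 .
\]
The second summand is what you want (with room to spare), but the first pairs the gradient difference with the global quantity $\nabla\varphi^*(z_{k+1})-x^*$ rather than with $x_{k+1}-x_k$. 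Cocoercivity of $\nabla f$ only controls $\inner{\nabla f(x_{k+1})-\nabla f(x_k),x_{k+1}-x_k}$, so it cannot absorb this term, and none of the quadratic slacks available to you ($\norm{\nabla f}^2$ or $\norm{x_{k+1}-x_k}^2$) dominates it.

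The paper avoids the mismatch by anchoring everything at $x_{k+1}$ from the start: it splits the difference as $(k+1)(k+2)\sigma s[f(x_{k+1})-f(x_k)]+(2k+4)\sigma s[f(x_{k+1})-f(x^*)]+\cdots$ (note $f(x_{k+1})-f(x^*)$, not $f(x_k)-f(x^*)$), and uses the refined inequalities $f(x_{k+1})-f(x_k)\leqslant\inner{\nabla f(x_{k+1}),x_{k+1}-x_k}-\frac{1}{2L}\norm{\nabla f(x_{k+1})-\nabla f(x_k)}^2$ and $f(x_{k+1})-f(x^*)\leqslant\inner{\nabla f(x_{k+1}),x_{k+1}-x^*}-\frac{1}{2L}\norm{\nabla f(x_{k+1})}^2$. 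Then every linear term carries $\nabla f(x_{k+1})$, the substitution of the formula for $\nabla\varphi^*(z_{k+1})$ makes the inner products cancel exactly, and the only residue of the index mismatch is the scalar $-(k+1)(k+2)\sigma s^2\inner{\nabla f(x_{k+1}),\nabla f(x_k)}$, which is absorbed precisely by the $-\frac{1}{2L}\norm{\nabla f(x_{k+1})-\nabla f(x_k)}^2$ slack once $s\leqslant 1/L$. Relatedly, your plain descent lemma and plain convexity leave the positive terms $\frac{(k+2)(k+3)\sigma sL}{2}\norm{x_{k+1}-x_k}^2$ and $\frac{(k+2)^2\sigma s^2}{2}\norm{\nabla f(x_{k+1})}^2$ (the latter from the Bregman slack) with nothing to cancel them; it is exactly the paper's $-\frac{1}{2L}\norm{\cdot}^2$ refinements that supply the needed negative quadratics. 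To repair your proof, switch to the paper's decomposition and to the gradient-at-$x_{k+1}$ forms of the smoothness inequalities.
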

	The details of the proof can be found in \ref{sec:thm4}. With such important estimation, we have the following result.
	\begin{corollary}
		If $ f\in\Fone $, $ \varphi $ is $ \sigma- $strongly convex function on $ \mathbb{R}^n $ and $ 0<s\leqslant\frac{1}{L} $, then the sequences $ \{x_k\}, \{z_k\} $ generated by \eqref{eq:unconstrained} satisfy\begin{align*}
			f(x_k)-f(x^*)&\leqslant\frac{2\sigma s[f(x_0)-f(x^*)]+4D_{\varphi^*}(z_1,z^*)}{(k+1)(k+2)\sigma s},\\
			\min_{0\leqslant i\leqslant k}\norm{\nabla f(x_i)}^2_2&=o\left(\frac{1}{k^3}\right).
		\end{align*}
	\end{corollary}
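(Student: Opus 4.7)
The plan is to mirror the argument used for Corollary \ref{cor:3} in the continuous setting, replacing integration with summation and using Theorem \ref{thm:4} as the discrete analogue of Theorem \ref{thm:3}. Since Theorem \ref{thm:4} gives $\E(k+1)-\E(k)\leqslant 0$, the Lyapunov sequence $\E(k)$ is non-increasing and non-negative, so it converges to some finite limit. This single structural fact powers both bounds.

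For the function value bound, I would simply drop the (non-negative) Bregman term from the definition of $\E(k)$ to obtain
\[
(k+1)(k+2)\sigma s[f(x_k)-f(x^*)]\leqslant\E(k)\leqslant\E(0)=2\sigma s[f(x_0)-f(x^*)]+4D_{\varphi^*}(z_1,z^*),
\]
where $\E(0)$ is computed directly from the definition. Dividing by $(k+1)(k+2)\sigma s$ delivers the first inequality. No further work is needed here.

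For the gradient-norm bound, I would sum the one-step estimate of Theorem \ref{thm:4} over $i\in\{\lfloor k/2\rfloor,\dots,k-1\}$ to get
\[
\E(k)-\E(\lfloor k/2\rfloor)\leqslant-\frac{\sigma s^2}{2}\sum_{i=\lfloor k/2\rfloor}^{k-1}(i+1)(i+2)\|\nabla f(x_i)\|_2^2\leqslant-\frac{\sigma s^2}{2}\min_{\lfloor k/2\rfloor\leqslant i\leqslant k-1}\|\nabla f(x_i)\|_2^2\sum_{i=\lfloor k/2\rfloor}^{k-1}(i+1)(i+2).
\]
A direct estimate shows $\sum_{i=\lfloor k/2\rfloor}^{k-1}(i+1)(i+2)=\Theta(k^3)$ (asymptotically $\tfrac{7}{24}k^3$), so rearranging yields
\[
\min_{0\leqslant i\leqslant k}\|\nabla f(x_i)\|_2^2\leqslant\min_{\lfloor k/2\rfloor\leqslant i\leqslant k-1}\|\nabla f(x_i)\|_2^2\leqslant\frac{C}{k^3}\bigl[\E(\lfloor k/2\rfloor)-\E(k)\bigr]
\]
for a suitable constant $C$. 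Since $\{\E(k)\}$ converges, Cauchy's criterion gives $\E(\lfloor k/2\rfloor)-\E(k)=o(1)$, which upgrades the bound to $o(1/k^3)$.

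The only delicate point, and the one I expect to be the main obstacle, is the arithmetic verification that the partial sum $\sum_{i=\lfloor k/2\rfloor}^{k-1}(i+1)(i+2)$ is indeed bounded below by a positive constant times $k^3$ uniformly in $k$ (one must handle small $k$ and the floor carefully). Everything else is a transcription of the continuous argument, with Theorem \ref{thm:4} playing the role that Theorem \ref{thm:3} played for Corollary \ref{cor:3}, and the Cauchy criterion replacing the analogous argument for the monotone bounded function $\E(t)$.
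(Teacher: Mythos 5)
Your proposal is correct and follows essentially the same route as the paper: drop the nonnegative Bregman term from the monotone Lyapunov sequence to get the function-value bound, then telescope the one-step estimate of Theorem \ref{thm:4} over $i$ from $\lfloor k/2\rfloor$ to (roughly) $k$, lower-bound the resulting cubic sum by a constant times $k^3$, and invoke Cauchy's criterion on the convergent sequence $\{\E(k)\}$ to upgrade $O(1/k^3)$ to $o(1/k^3)$. The paper evaluates the partial sum exactly via the telescoping identity $\sum(i+1)(i+2)=\tfrac{1}{3}[(k+1)(k+2)(k+3)-\lfloor k/2\rfloor(\lfloor k/2\rfloor+1)(\lfloor k/2\rfloor+2)]$, which settles the only point you flagged as delicate.
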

	\begin{proof}
		Theorem \ref{thm:4} implies that $ \{\E(k)\} $ is non-increasing, which implies\[
		(k+1)(k+2)\sigma s[f(x_k)-f(x^*)]\leqslant\E(k)\leqslant\E(0)=2\sigma s[f(x_k)-f(x^*)]+4D_{\varphi^*}(z_1,z^*).
		\]
		Diving $(k+1)(k+2)\sigma s$ on both sides of the above inequality, we obtain the required inequality. Next, summing up the inequalities in Theorem \ref{thm:4} from $ \left\lfloor\dfrac{ k}{2}\right\rfloor $ to $ k $, we have\[
		-\sum_{i=\left\lfloor\frac{k}{2}\right\rfloor}^{k}\frac{(i+1)(i+2)\sigma s^2}{2}\norm{\nabla f(x_i)}^2\leqslant\sum_{i=\left\lfloor\frac{k}{2}\right\rfloor}^k \E(i+1)-\E(i)=\E(k+1)-\E\left(\left\lfloor\frac{k}{2}\right\rfloor\right).
		\]
		Thus\[
		\frac{\sigma s^2}{6}\min_{\left\lfloor\frac{k}{2}\right\rfloor\leqslant i\leqslant k}\norm{\nabla f(x_i)}^2_2\leqslant\dfrac{\E(k+1)-\E\left(\left\lfloor\frac{k}{2}\right\rfloor\right)}{(k+1)(k+2)(k+3)-\left\lfloor\frac{k}{2}\right\rfloor\left(\left\lfloor\frac{k}{2}\right\rfloor+1\right)\left(\left\lfloor\frac{k}{2}\right\rfloor+2\right)}
		\]
		Since the non-negative sequence $ \{\E(k)\} $ is non-increasing, $ \lim\limits_{k\to\infty}\E(k) $ exists. By Cauchy's criteria, $ \E(k+1)-\E\left(\left\lfloor\frac{k}{2}\right\rfloor\right)=o(1) $. In conclusion, we have\[
		\min_{0\leqslant i\leqslant k}\norm{\nabla f(x_i)}^2_2\leqslant\min_{\frac{\lfloor k\rfloor}{2}\leqslant i\leqslant k}\norm{\nabla f(x_i)}_2^2=o\left(\frac{1}{k^3}\right).
		\]
	\end{proof}
	\subsection{The general discrete cases}
	Now we analysis the original Nesterov's accelerated mirror descent via Lyapunov function framework. Motivated by the convergence result proven in \cite{nesterov05} and the continuous-time Lyapunov function, we build up the following Lyapunov function. 
	\begin{equation}
		\E(k)=k(k+1)\sigma s[f(y_k)-f(x^*)]+4D_{\varphi^*}(z_k,z^*).
	\end{equation}
	The upper bound of the difference $ \E(k+1)-\E(k) $ is given by the below theorem.
	
	\begin{theorem}
		\label{thm:5}
		Let $ f\in\Fone, \varphi $ be $ \sigma- $stongly convex function defined on $ C $. Let $\{x_k\}, \{y_k\}, \{z_k\}$ be the sequences generated by
		\begin{align*}
			z_{k+1}&=z_k-\frac{(k+1)\sigma s}{2}\nabla f(x_k),\\
			y_{k+1}&=\mathop{\arg\min}_{y}\left\{s\inner{\nabla f(x_k),y}+\frac{1}{2}\norm{y-x_k}^2\middle|y\in C\right\},\\
			x_{k+1}&=\frac{k+1}{k+3}y_{k+1}+\frac{2}{k+3}\nabla\varphi^*(z_{k+1}),
		\end{align*}
		then \[
		\E(k+1)-\E(k)\leqslant-\frac{(k+1)(k+2)(1-Ls)}{2}\norm{y_{k+1}-x_k}^2.
		\]
	\end{theorem}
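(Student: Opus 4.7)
The plan is to mirror the continuous-time argument of Theorem~\ref{thm:3} in discrete time: compute $\E(k+1)-\E(k)$, split it into the change in the potential term $k(k+1)\sigma s[f(y_k)-f(x^*)]$ and the change in the mixed term $4D_{\varphi^*}(z_k,z^*)$, bound each using $L$-smoothness, convexity, and a Bregman three-point identity, and count on a cross-term cancellation to leave only a negative multiple of $\norm{y_{k+1}-x_k}^2$.

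For the potential change I would rewrite
\[
(k+1)\sigma s\bigl[(k+2)f(y_{k+1})-kf(y_k)-2f(x^*)\bigr]=(k+1)\sigma s\bigl[(k+2)(f(y_{k+1})-f(x_k))+k(f(x_k)-f(y_k))+2(f(x_k)-f(x^*))\bigr],
\]
apply $L$-smoothness to the first bracket and convexity of $f$ at $x_k$ in the directions $y_k$ and $x^*$ to the other two, and observe that the coefficients of $x_k$ in the resulting inner product cancel, reducing the potential part to
\[
(k+1)\sigma s\inner{\nabla f(x_k),(k+2)y_{k+1}-ky_k-2x^*}+\tfrac{(k+1)(k+2)L\sigma s}{2}\norm{y_{k+1}-x_k}^2.
\]

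For the Bregman change I would invoke the three-point identity $D_{\varphi^*}(z_{k+1},z^*)-D_{\varphi^*}(z_k,z^*)=D_{\varphi^*}(z_{k+1},z_k)+\inner{\nabla\varphi^*(z_k)-x^*,z_{k+1}-z_k}$, substitute $z_{k+1}-z_k=-\frac{(k+1)\sigma s}{2}\nabla f(x_k)$, and use the crucial identity $2\nabla\varphi^*(z_k)=(k+2)x_k-ky_k$ obtained by solving the $x_k$-update rule (one step earlier) for $\nabla\varphi^*(z_k)$. The resulting inner product $-\frac{(k+1)\sigma s}{2}\inner{\nabla f(x_k),(k+2)x_k-ky_k-2x^*}$ cancels the ``$-ky_k-2x^*$'' contribution from the potential part exactly, while its $(k+2)x_k$ combines with the $(k+2)y_{k+1}$ to produce the clean term $(k+1)(k+2)\sigma s\inner{\nabla f(x_k),y_{k+1}-x_k}$. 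Finally, testing the optimality of the projection subproblem defining $y_{k+1}$ against $y=x_k$ yields $s\inner{\nabla f(x_k),y_{k+1}-x_k}\leqslant-\tfrac12\norm{y_{k+1}-x_k}^2$, and combining with the $L$-smoothness contribution produces the claimed $-\tfrac{(k+1)(k+2)(1-Ls)}{2}\norm{y_{k+1}-x_k}^2$ bound (with the $\sigma$ multiplier from the Lyapunov coefficient cancelling against the $1/s$ coming out of the projection inequality).

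The main obstacle I expect is the residual $4D_{\varphi^*}(z_{k+1},z_k)\geqslant 0$ left over by the three-point identity. Because it is nonnegative it cannot simply be dropped, so one must either switch to the conjugate identity $-D_{\varphi^*}(z_k,z_{k+1})+\inner{\nabla\varphi^*(z_{k+1})-x^*,z_{k+1}-z_k}$ combined with $2\nabla\varphi^*(z_{k+1})=(k+3)x_{k+1}-(k+1)y_{k+1}$ from the $x_{k+1}$-update (so that the residue enters with a favorable sign), or use the $(1/\sigma)$-smoothness of $\varphi^*$ to bound the residue by $\tfrac{(k+1)^2\sigma s^2}{2}\norm{\nabla f(x_k)}_2^2$ and absorb it into the negative $\norm{y_{k+1}-x_k}^2$ contribution via the projection estimate $\norm{y_{k+1}-x_k}\leqslant s\norm{\nabla f(x_k)}_2$.
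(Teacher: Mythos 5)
Your decomposition of the potential change into $(k+2)(f(y_{k+1})-f(x_k))+k(f(x_k)-f(y_k))+2(f(x_k)-f(x^*))$ and the cancellation of the $-ky_k-2x^*$ pieces against the mirror step are exactly the paper's Steps 1--3, and your bookkeeping for the main negative term is right. But the obstacle you flag at the end --- the leftover $4D_{\varphi^*}(z_{k+1},z_k)\geqslant 0$ --- is a genuine gap, and neither of your two remedies closes it. For remedy (a): expanding the Bregman difference around $z_{k+1}$ instead of $z_k$ does flip the sign of the explicit residue, but to then recover a multiple of $y_{k+1}-x_k$ you must substitute $ky_k=(k+2)x_k-2\nabla\varphi^*(z_k)$, which reintroduces $4\inner{z_{k+1}-z_k,\nabla\varphi^*(z_{k+1})-\nabla\varphi^*(z_k)}=4D_{\varphi^*}(z_{k+1},z_k)+4D_{\varphi^*}(z_k,z_{k+1})$; the net residue is again $+4D_{\varphi^*}(z_{k+1},z_k)$. (The alternative substitution $2\nabla\varphi^*(z_{k+1})=(k+3)x_{k+1}-(k+1)y_{k+1}$ leaves $(2k+3)y_{k+1}-ky_k-(k+3)x_{k+1}$, which does not reduce to a multiple of $y_{k+1}-x_k$.) For remedy (b): the inequality $\norm{y_{k+1}-x_k}\leqslant s\norm{\nabla f(x_k)}$ runs the wrong way --- in the constrained case the projected step can be arbitrarily short relative to the gradient, so a negative multiple of $\norm{y_{k+1}-x_k}^2$ cannot absorb a positive multiple of $s^2\norm{\nabla f(x_k)}^2$; and even in the unconstrained case, where equality holds, the residue $\tfrac{(k+1)^2\sigma s^2}{2}\norm{\nabla f(x_k)}^2$ exceeds the available $\tfrac{(k+1)(k+2)(1-Ls)\sigma s^2}{2}\norm{\nabla f(x_k)}^2$ unless $Ls\leqslant\tfrac{1}{k+2}$.

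The idea you are missing is the coupling step: test the minimality of the $y_{k+1}$-subproblem not at $y=x_k$ but at the feasible point $\tfrac{k}{k+2}y_k+\tfrac{2}{k+2}\nabla\varphi^*(z_{k+1})$ (feasible because it is a convex combination of points of $C$). This injects $2\nabla\varphi^*(z_{k+1})$ directly into the potential estimate, so that after adding the convexity terms the inner product collapses to $(k+1)\sigma s\inner{\nabla f(x_k),2\nabla\varphi^*(z_{k+1})-2x^*}$, which cancels \emph{exactly} against $4\inner{z_{k+1}-z_k,\nabla\varphi^*(z_{k+1})-\nabla\varphi^*(z^*)}$ via $(k+1)\sigma s\nabla f(x_k)=-2(z_{k+1}-z_k)$; meanwhile the quadratic penalty at this test point equals $\tfrac{2}{(k+2)^2}\norm{\nabla\varphi^*(z_{k+1})-\nabla\varphi^*(z_k)}^2$ (since $\tfrac{k}{k+2}y_k-x_k=-\tfrac{2}{k+2}\nabla\varphi^*(z_k)$) and is absorbed by the $-\tfrac{\sigma}{2}\norm{\nabla\varphi^*(z_{k+1})-\nabla\varphi^*(z_k)}^2$ supplied by the $\sigma^{-1}$-smoothness of $\varphi^*$ --- precisely the term your three-point identity discards as $D_{\varphi^*}(z_k,z_{k+1})$. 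With that one change your argument becomes the paper's proof; without it, the bound you can reach carries an uncancelled positive term of order $k^2 s^2\norm{\nabla f(x_k)}^2$.
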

	The details of the proof can be found in \ref{sec:thm5}. Now we can characterize the convergence results of \eqref{eq:acceleratedmirror}.
	\begin{corollary}
		\label{cor:5}
		If $ f\in\Fone, \varphi $ is $ \sigma- $stongly convex function defined on $ C $, let $\{x_k\}, \{y_k\}, \{z_k\}$ be the sequences generated by \eqref{eq:acceleratedmirror}. If $ 0<s\leqslant\dfrac{1}{L} $, then $ \{x_k\}, \{y_k\} $ satisfy\begin{align*}
			f(y_k)-f(x^*)\leqslant\frac{4D_\varphi(x^*,x_0)}{k(k+1)\sigma s}.
		\end{align*}
		In addition, $ 0<s<\dfrac{1}{L} $, $ \{x_k\}, \{y_k\} $ satisfy\[
		\inf_{0\leqslant u\leqslant k}\norm{y_{k+1}-x_k}^2=o\left(\frac{1}{k^3}\right).
		\]
	\end{corollary}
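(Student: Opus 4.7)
The plan is to parallel the derivation of Corollary~\ref{cor:3} and the unlabeled corollary following Theorem~\ref{thm:4}, using Theorem~\ref{thm:5} as the key estimate. When $0<s\leqslant 1/L$, the factor $(1-Ls)\geqslant 0$, so Theorem~\ref{thm:5} forces $\{\E(k)\}$ to be non-increasing. This monotonicity does essentially all of the work.

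For the function-value bound, first I would evaluate $\E(0)$. The coefficient $k(k+1)$ vanishes at $k=0$, giving $\E(0)=4D_{\varphi^*}(z_0,z^*)$, which by Lemma~\ref{lem:1} equals $4D_\varphi(x^*,x_0)$ since $\nabla\varphi^*(z_0)=x_0$. Monotonicity of $\{\E(k)\}$ then yields
\[
k(k+1)\sigma s[f(y_k)-f(x^*)]\leqslant\E(k)\leqslant\E(0)=4D_\varphi(x^*,x_0),
\]
and dividing through by $k(k+1)\sigma s$ delivers the stated $O(1/k^2)$ rate.

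For the gradient-like squared-norm term, I would assume the strict inequality $0<s<1/L$ so that $1-Ls>0$. The plan is to sum the inequality in Theorem~\ref{thm:5} from $i=\lfloor k/2\rfloor$ to $i=k$, producing a telescoping bound
\[
\frac{1-Ls}{2}\sum_{i=\lfloor k/2\rfloor}^{k}(i+1)(i+2)\norm{y_{i+1}-x_i}^2\leqslant\E(\lfloor k/2\rfloor)-\E(k+1).
\]
Lower-bounding the left-hand side by $\min_{\lfloor k/2\rfloor\leqslant i\leqslant k}\norm{y_{i+1}-x_i}^2$ times $\sum_{i=\lfloor k/2\rfloor}^{k}(i+1)(i+2)=\Theta(k^3)$ produces the desired rate, provided the right-hand side is $o(1)$. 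Since the non-negative sequence $\{\E(k)\}$ is non-increasing, $\lim_{k\to\infty}\E(k)$ exists, and Cauchy's criterion then gives $\E(\lfloor k/2\rfloor)-\E(k+1)=o(1)$, completing the argument.

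The main obstacle is really just bookkeeping: verifying that the windowed sum of $(i+1)(i+2)$ grows like $k^3$ rather than $k^2$, which is exactly what upgrades the convergence from $O(1/k^2)$ to $o(1/k^3)$, and keeping straight the distinction between $s\leqslant 1/L$ (sufficient for the function-value bound) and $s<1/L$ (needed so that the coefficient in Theorem~\ref{thm:5} is strictly negative, enabling the integration-over-a-window trick). Since Theorem~\ref{thm:5} has already absorbed all the nontrivial convex-analysis estimates, and Corollary~\ref{cor:3} has demonstrated the template in the continuous setting, the present argument is essentially a careful transcription to discrete time.
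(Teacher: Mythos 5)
Your proposal is correct and follows essentially the same route as the paper: monotonicity of $\E(k)$ from Theorem~\ref{thm:5} together with $\E(0)=4D_{\varphi^*}(z_0,z^*)=4D_\varphi(x^*,x_0)$ for the function-value bound, then summing the per-step inequality over the window $\lfloor k/2\rfloor\leqslant i\leqslant k$ and invoking Cauchy's criterion for the $o(1/k^3)$ rate. Your telescoped inequality is in fact written with the correct sign, $\E(\lfloor k/2\rfloor)-\E(k+1)$, where the paper's display has an apparent sign typo; otherwise the arguments coincide.
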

	\begin{proof}
		From Theorem \ref{thm:5}, we know that $ 0<s\leqslant\dfrac{1}{L} $ is a sufficient condition for $ \E(k+1)-\E(k)\leqslant 0 $. Thus\[
		k(k+1)\sigma s[f(y_k)-f(x^*)]\leqslant\E(k)\leqslant\E(0)=4D_\varphi(x^*,x_0).
		\]
		By dividing $k(k+1)\sigma s$ on both sides, we achieve the desired inequality. In addition, if we sum up the inequalities in Theorem \ref{thm:5} from $ \left\lfloor\frac{ k}{2}\right\rfloor $ to $ k $, then we have\[
		\sum_{i=\left\lfloor\frac{ k}{2}\right\rfloor}^k\frac{(i+1)(i+2)(1-Ls)}{2}\norm{y_{i+1}-x_i}^2\leqslant\E(k+1)-\E\left(\left\lfloor\frac{ k}{2}\right\rfloor\right).
		\]
		Thus\[
		\min_{\left\lfloor\frac{ k}{2}\right\rfloor\leqslant i\leqslant k}\norm{y_{i+1}-x_i}^2_2\leqslant\dfrac{6\E(k+1)-6\E\left(\frac{\lfloor k\rfloor}{2}\right)}{\left[(k+1)(k+2)(k+3)-\left\lfloor\frac{ k}{2}\right\rfloor\left(\left\lfloor\frac{k}{2}\right\rfloor+1\right)\left(\left\lfloor\frac{ k}{2}\right\rfloor+2\right)\right](1-Ls)}.
		\]
		Since the non-negative sequence $ \{\E(k)\} $ is non-increasing, $ \lim\limits_{k\to\infty}\E(k) $ exists. By Cauchy's criteria, $ \E(k+1)-\E\left(\left\lfloor\frac{ k}{2}\right\rfloor\right)=o(1) $. In conclusion\[
		\min_{0\leqslant i\leqslant k}\norm{y_{i+1}-x_i}^2_2\leqslant\min_{\left\lfloor\frac{ k}{2}\right\rfloor\leqslant i\leqslant k}\norm{y_{i+1}-x_i}_2^2=o\left(\frac{1}{k^3}\right).
		\]
	\end{proof}
	\subsection{Higher-order mirror descent}
	In \cite{wibisono16}, Wibisono et al. proposed a higher-order mirror descent, which is a generalization of Nesterov's accelerated cubic Newton method. The higher-order mirror descent is described as followed:
	\begin{equation}
		\begin{split}
			x_{k+1}&=\frac{p}{k+p}z_k+\frac{k}{k+p}y_k,\\
			z_{k+1}&=\argmin_z\left\{Cps(k+1)^{(p-1)}\inner{\nabla f(y_{k+1}),z}+D_\varphi(z,z_k)\right\}.
		\end{split}
		\label{eq:wibisono}
	\end{equation}
	Where $ p\geqslant 2 $ is an integer, $ k^{(p)}=k(k+1)\cdots(k+p-1) $ and the sequence $ \{y_k\} $ satisfies\begin{equation}\label{eq:acceleratekey}
		\inner{\nabla f(y_k),x_k-y_k}\geqslant Ms^{\frac{1}{p-1}}\norm{\nabla f(y_k)}_*^{\frac{p}{p-1}},
	\end{equation}
	for some constant $ M>0 $. Also the Bergman divergence $ D_\varphi $ is required to be lower bounded by $ p- $th power of of the norm\begin{equation}\label{eq:pthstrong}
		D_\varphi(x,y)\geqslant\frac{\sigma}{p}\norm{x-y}^p,\quad\forall x, y\in C.
	\end{equation}
	Or we say $ \varphi $ is $ \sigma- $strongly convex of order $ p $. \cite{wibisono16} proves that if $ f $ is $ p- $times continuous differentiable and $ \nabla^{(p-1)}f $ is $ L-$Lipschitz continuous, then the $ y_k $ generated by the $ (p-1)- $th order gradient map\[
	y_k=\argmin_y\left\{\sum_{i=0}^{p-1}\frac{1}{i!}\nabla^i f(x)(y-x)^i+\frac{N}{sp}\norm{y-x}^p\middle|y\in C\right\}
	\]
	satisfies \eqref{eq:acceleratekey} for some chosen $ N $. They used estimate-sequence technique to show that if $0<C\leqslant\dfrac{M^{p-1}}{p^p}$, $f(x_k)-f(x^*)\leqslant O\left(\dfrac{1}{sk^p}\right)$. In this paper, we exploit Lyapunov function framework to demonstrate the same convergence rate for function values of higher-order mirror descent with a wider range of parameters. Additionally, we show that the convergence rate of gradient norm minimization is $o\left(\dfrac{1}{k^{p+1}}\right)$. To make use of Lyapunov function framework, we need to translate the recursive rule into the following mirror-map form.
	\begin{equation}
		\begin{split}
			x_{k+1}&=\frac{p}{k+p}z_k+\frac{k}{k+p}y_k,\\
			z_{k+1}&=z_k-Csp(k+1)^{(p-1)}\nabla f(y_{k+1}).
		\end{split}
		\label{eq:wibisonomirrormap}
	\end{equation}
	Before proving the convergence result of higher-order mirror descent, the smoothness of $ \varphi^* $ is required.
	\begin{lemma}
		\label{lem:2}
		If $ \varphi $ is $ \sigma- $strongly convex of order $ p $, then $\varphi^*$ is differentiable. In addition, we have
		\[
		\varphi^*(z')\leqslant\varphi^*(z)+\inner{\nabla\varphi^*(z),z'-z}+\frac{p-1}{p}\left(\frac{1}{\sigma}\right)^{\frac{1}{p-1}}\norm{z'-z}^{\frac{p}{p-1}},\quad\forall z, z'\in\mathrm{dom}(\varphi). 
		\] 
	\end{lemma}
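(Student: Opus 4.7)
The plan is to derive both claims from Fenchel--Young duality. For the differentiability part, I would first observe that the order-$p$ strong convexity bound \eqref{eq:pthstrong} forces $\varphi$ to be strictly convex, since it yields $D_\varphi(x,y)>0$ whenever $x\neq y$. By classical convex analysis (Theorem~26.3 of \cite{rockafellar70}), the conjugate of an essentially strictly convex closed convex function is essentially smooth, so $\varphi^*$ is differentiable on the interior of its effective domain.

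For the quantitative upper bound, I would fix $z$ in the domain of $\varphi^*$ and set $x=\nabla\varphi^*(z)$, so that $z\in\partial\varphi(x)$ and the Fenchel equality $\varphi^*(z)=\langle z,x\rangle-\varphi(x)$ holds. Expanding $\varphi^*(z')$ through its definition as a supremum and then rearranging should yield the key identity
\[
\varphi^*(z')-\varphi^*(z)-\langle\nabla\varphi^*(z),z'-z\rangle \;=\; \sup_{y}\bigl\{\langle z'-z,\,y-x\rangle - D_\varphi(y,x)\bigr\}.
\]
Substituting the hypothesis $D_\varphi(y,x)\geq\tfrac{\sigma}{p}\|y-x\|^p$ and then setting $u=y-x$, the right-hand side is dominated by $\sup_{u}\bigl\{\langle z'-z,u\rangle-\tfrac{\sigma}{p}\|u\|^p\bigr\}$.

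The remaining step is a standard Young-type calculation: by duality between the norm on $E$ and its dual norm on $E^*$, this supremum reduces to the one-dimensional maximization $\sup_{t\geq 0}\bigl\{\|z'-z\|_\ast t-\tfrac{\sigma}{p}t^p\bigr\}$, whose optimizer $t^\ast=(\|z'-z\|_\ast/\sigma)^{1/(p-1)}$ produces exactly the constant $\tfrac{p-1}{p}\sigma^{-1/(p-1)}\|z'-z\|_\ast^{p/(p-1)}$ claimed in the lemma. The only mild obstacle is that $\varphi$ need not be differentiable, so care is required in defining $D_\varphi(y,x)$ through the specific subgradient $z\in\partial\varphi(x)$ supplied by the Fenchel equality; once this choice is pinned down, all of the manipulations above remain valid, and the bound extends to general $z'$ by a limiting argument if needed.
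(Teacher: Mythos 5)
Your proposal is correct, and it is essentially the standard argument that the paper itself invokes by deferring to Lemma~4.2.2 of \cite{nesterov18}: Fenchel--Young equality at $x=\nabla\varphi^*(z)$, the identity expressing the conjugate's Bregman remainder as $\sup_y\{\langle z'-z,y-x\rangle-D_\varphi(y,x)\}$, and the one-dimensional Young-type maximization yielding the constant $\tfrac{p-1}{p}\sigma^{-1/(p-1)}$. Your derivation also makes explicit that the norm on $z'-z$ should be the dual norm, which the lemma's statement leaves implicit.
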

	The proof of Lemma \ref{lem:2} is referred to the proof of Lemma 4.2.2 in \cite{nesterov18}. Finally, we can describe and prove the convergence rate of \eqref{eq:wibisonomirrormap}.
	\begin{theorem}
		\label{thm:7}
		Let $ f $ be a continuous differentiable convex function, $ \varphi $ is $ \sigma- $strongly convex of order $ p $ and $ \{x_k\}, \{y_k\}, \{z_k\} $ be the sequences generated by \eqref{eq:wibisonomirrormap} and \eqref{eq:acceleratekey}. If $ 0<C\leqslant\dfrac{\sigma M^{p-1}}{(p-1)^{p-1}p} $, then\[
		f(y_k)-f(x^*)\leqslant\frac{D_\varphi(x,x^*)}{Csk^{(p)}}.
		\]
		In addition, if $ 0<C<\dfrac{\sigma M^{p-1}}{(p-1)^{p-1}p} $, then\[
		\min_{0\leqslant i\leqslant k}\norm{\nabla f(y_k)}^{\frac{p}{p-1}}_*=o\left(\frac{1}{k^{p+1}}\right).
		\]
	\end{theorem}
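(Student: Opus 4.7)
The plan is to replicate the discrete Lyapunov-function strategy of Theorems \ref{thm:4} and \ref{thm:5} in the higher-order setting. The natural candidate is
\[
\mathcal{E}(k)=Cs\,k^{(p)}[f(y_k)-f(x^*)]+D_{\varphi^*}(\tilde{z}_k,\tilde{z}^*),
\]
where $\tilde{z}_k$ is the dual variable driving the mirror update (so that $z_k=\nabla\varphi^*(\tilde{z}_k)$ and $\tilde{z}_{k+1}-\tilde{z}_k=-Csp(k+1)^{(p-1)}\nabla f(y_{k+1})$) and $\nabla\varphi^*(\tilde{z}^*)=x^*$. By Lemma \ref{lem:1}, $D_{\varphi^*}(\tilde{z}_0,\tilde{z}^*)=D_\varphi(x^*,x_0)$, which matches the numerator appearing in the theorem.

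For the potential part, I would use the identities $(k+1)^{(p)}-k^{(p)}=p(k+1)^{(p-1)}$ and $k^{(p)}=k(k+1)^{(p-1)}$ together with the convexity bounds $f(y_{k+1})-f(y_k)\leqslant\inner{\nabla f(y_{k+1}),y_{k+1}-y_k}$ and $f(y_{k+1})-f(x^*)\leqslant\inner{\nabla f(y_{k+1}),y_{k+1}-x^*}$. Invoking the coupling $(k+p)x_{k+1}=pz_k+ky_k$, the two contributions contract to
\[
Cs(k+1)^{(p-1)}\bigl[(k+p)\inner{\nabla f(y_{k+1}),y_{k+1}-x_{k+1}}+p\inner{\nabla f(y_{k+1}),z_k-x^*}\bigr].
\]
Applying the acceleration inequality \eqref{eq:acceleratekey} to the first inner product and using $(k+p)(k+1)^{(p-1)}=(k+1)^{(p)}$ yields a dominant negative contribution $-Cs^{p/(p-1)}M(k+1)^{(p)}\norm{\nabla f(y_{k+1})}_*^{p/(p-1)}$ and a leftover $+Csp(k+1)^{(p-1)}\inner{\nabla f(y_{k+1}),z_k-x^*}$.

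For the divergence part, Lemma \ref{lem:2} gives
\[
D_{\varphi^*}(\tilde{z}_{k+1},\tilde{z}^*)-D_{\varphi^*}(\tilde{z}_k,\tilde{z}^*)\leqslant\inner{z_k-x^*,\tilde{z}_{k+1}-\tilde{z}_k}+\frac{p-1}{p}\sigma^{-1/(p-1)}\norm{\tilde{z}_{k+1}-\tilde{z}_k}^{p/(p-1)},
\]
and substituting $\tilde{z}_{k+1}-\tilde{z}_k=-Csp(k+1)^{(p-1)}\nabla f(y_{k+1})$ produces a cross term that cancels exactly the leftover from the previous step. Collecting terms, one obtains
\[
\mathcal{E}(k+1)-\mathcal{E}(k)\leqslant\left\{-Cs^{p/(p-1)}M(k+1)^{(p)}+\frac{p-1}{p}\sigma^{-1/(p-1)}(Csp)^{p/(p-1)}\bigl[(k+1)^{(p-1)}\bigr]^{p/(p-1)}\right\}\norm{\nabla f(y_{k+1})}_*^{p/(p-1)}.
\]

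The main obstacle is to check that this bracketed coefficient is $\leqslant 0$ under $C\leqslant\sigma M^{p-1}/[(p-1)^{p-1}p]$. After canceling common factors, this reduces to the purely combinatorial inequality $(k+1)^{(p)}\geqslant[(k+1)^{(p-1)}]^{p/(p-1)}$, which is equivalent to $k+p$ being at least the geometric mean of $\{k+1,\ldots,k+p-1\}$, and therefore holds automatically. With this, $\mathcal{E}(k)$ is non-increasing, so $Csk^{(p)}[f(y_k)-f(x^*)]\leqslant\mathcal{E}(0)=D_\varphi(x^*,x_0)$ gives the first conclusion. For the strict version of the hypothesis a positive slack is retained, so the bracket is bounded above by $-c_0(k+1)^{(p)}\norm{\nabla f(y_{k+1})}_*^{p/(p-1)}$ for some $c_0>0$; summing from $\lfloor k/2\rfloor$ to $k$ and invoking the Cauchy criterion on the non-negative non-increasing sequence $\{\mathcal{E}(k)\}$, exactly as in the proof of Corollary \ref{cor:5}, then yields $\min_{0\leqslant i\leqslant k}\norm{\nabla f(y_i)}_*^{p/(p-1)}=o(1/k^{p+1})$.
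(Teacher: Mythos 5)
Your proposal is correct and follows essentially the same route as the paper: the same Lyapunov function $Cs\,k^{(p)}[f(y_k)-f(x^*)]+D_{\varphi^*}(\cdot,\cdot)$, the same convexity/coupling decomposition with cancellation of the cross term against the mirror update via Lemma \ref{lem:2}, the same combinatorial inequality $[(k+1)^{(p-1)}]^{p/(p-1)}\leqslant(k+1)^{(p)}$ (which the paper asserts and you justify), and the same $\lfloor k/2\rfloor$-to-$k$ summation for the gradient-norm rate.
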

	This theorem tells us that Lyapunov function framework is superior to estimate-sequence technique. The convergence result in \cite{wibisono16} is hold when $ 0<C\leqslant\dfrac{\sigma M^{p-1}}{p^p} $, by contrast the assumption in theorem \ref{thm:7} is $ 0<C\leqslant\dfrac{\sigma M^{p-1}}{(p-1)^{p-1}p} $, which is wider than the previous one. Also estimate-sequence technique cannot tells us any information about the convergence rate of gradient norm, compared to $ \min_{0\leqslant i\leqslant k}\norm{\nabla f(y_k)}^{\frac{p}{p-1}}_*=o\left(\frac{1}{k^{p+1}}\right) $ in theorem \ref{thm:7}.
	
	\section{Discussion}
	In this paper, we generalized the second-order high-resolution ODEs for Nesterov’s accelerated gradient method	to first-order high-resolution ODEs for Nesterov’s accelerated mirror descent. Also, we presented Lyapunov function framework for characterizing the convergence rates of accelerated schemes in both continuous-time and discrete-time.	Finally, we generalized the Lyapunov function framework to higher-order mirror descent.
	
	There are a good many open problems of high-resolution ODEs. First of all, there remains a significant number of algorithms that have not yet been transformed into discretizations of ordinary differential equations. The excessive gap technique proposed by Nesterov for solving primal-dual problems is one such example.	In recent times, researchers have increasingly focused on optimization algorithms that involve solving optimization	subproblems, such as alternative direction method of multipliers and augmented Lagrangian methods. It would be	worthwhile to analyze these algorithms using ODEs and attempt to derive their accelerated	variants.
	
	\appendix
	\section{The Proof in Section \ref{sec:2}}
	\subsection{The Proof of Theorem \ref{thm:2}}
	\label{sec:thm2}
	Recall the Lyapunov function\[
	\E(k)=k\sigma s[f(x_k)-f(x^*)]+D_{\varphi^*}(z_k,z^*).
	\]
	
	Step 1: Estimate the upper bound of difference $ \E(k+1)-\E(k) $. Diving the difference into three parts.
	\begin{align*}
		\E(k+1)-\E(k)=&\underbrace{(k+1)\sigma s(f(x_{k+1})-f(x_k))}_{\text{I}}+\underbrace{\sigma s(f(x_k)-f(x^*))}_{\text{II}}\\
		&+\underbrace{\varphi^*(z_{k+1})-\varphi^*(z_k)+\inner{z_{k+1}-z_k,\nabla\varphi^*(z^*)}}_{\text{III}}.
	\end{align*}
	Since $ f\in\Fone, $ we have the following inequalities.\begin{align*}
		f(x_{k+1})-f(x_k)&\leqslant\inner{\nabla f(x_k),x_{k+1}-x_k}+\frac{L}{2}\norm{x_{k+1}-x_k}^2.\\
		f(x_k)-f(x^*)&\leqslant\inner{\nabla f(x_k),x_k-x^*}.
	\end{align*}
	Thus\begin{align*}
		\text{I}&\leqslant \underbrace{(k+1)\sigma s\inner{\nabla f(x_k),x_{k+1}-x_k}}_{\text{I}_1}+\frac{(k+1) L\sigma s}{2}\norm{x_{k+1}-x_k}^2\\
		\text{II}&\leqslant \underbrace{\sigma s\inner{\nabla f(x_k),x_k-x^*}}_{\text{II}_1}.
	\end{align*}
	By $ \varphi^*\in\Fone[\sigma^{-1}] $, we have the following inequality\[
	\varphi^*(z_{k+1})-\varphi^*(z_k)\leqslant\inner{z_{k+1}-z_k,\nabla\varphi^*(z_{k+1})}-\frac{\sigma}{2}\norm{\nabla\varphi^*(z_{k+1})-\nabla\varphi^*(z_k)}^2.
	\]
	Thus\begin{align*}
		\text{III}\leqslant-\underbrace{\sigma s\inner{\nabla f(x_k),x_{k+1}-x^*}}_{\text{III}_1}-\frac{\sigma}{2}\norm{x_{k+1}-x_k}^2.
	\end{align*}
	By summing up previous estimation, we have\begin{align*}
		\E(k+1)-\E(k)\leqslant&\underbrace{(k+1)\sigma s\inner{\nabla f(x_k),x_{k+1}-x_k}}_{\text{I}_1}+\frac{(k+1) L\sigma s}{2}\norm{x_{k+1}-x_k}^2+\underbrace{\sigma s\inner{\nabla f(x_k),x_k-x^*}}_{\text{II}_1}\\
		&-\underbrace{\sigma s\inner{\nabla f(x_k),x_{k+1}-x^*}}_{\text{III}_1}-\frac{\sigma}{2}\norm{x_{k+1}-x_k}^2.
	\end{align*}
	First, we have\[
	\text{II}_1-\text{III}_1=-\sigma s\inner{\nabla f(x_k),x_{k+1}-x_k}.
	\]
	Thus
	\begin{align*}
		\text{I}_1+\text{II}_1-\text{III}_1&=k\sigma s\inner{\nabla f(x_k),x_{k+1}-x_k}\\
		&=-k\inner{z_{k+1}-z_k,\nabla\varphi^*(z_{k+1})-\nabla\varphi^*(z_k)}\\
		&\leqslant-k\sigma\norm{\nabla\varphi^*(z_{k+1})-\nabla\varphi^*(z_k)}^2\\
		&=-k\sigma\norm{x_{k+1}-x_k}^2.
	\end{align*}
	The inequalities is because $ \varphi^*\in\Fone[\sigma^{-1}]. $ In conclusion, we have\begin{align*}
		\E(k+1)-\E(k)\leqslant&\left(-k-\frac{1}{2}+\frac{(k+1)Ls}{2}\right)\sigma\norm{x_{k+1}-x_k}^2.
	\end{align*}
	Step 2: From Lyapunov function to convergence rate. As long as $ s\in[0,\frac{1}{L}] $, $ \E(k+1)-\E(k)\leqslant -\frac{k\sigma}{2}\norm{x_{k+1}-x_k}^2 $. Then we have\[
	k\sigma s[f(x_k)-f(x^*)]\leqslant\E(k)\leqslant\E(0)=D_{\varphi^*}(z_0,z^*)=D_\varphi(x^*,x_0).
	\]
	The last equality is due to Lemma \ref{lem:1}. Diving $k\sigma s$ on both sides, we attain the desired inequality. 
	
	If $ 0<s\leqslant\frac{1}{L} $, then we sum up $ \E(i+1)-\E(i) $ from $ \left\lfloor\frac{k}{2}\right\rfloor $ to $ k $, and have\[
	\E(k+1)-\E\left(\left\lfloor\frac{k}{2}\right\rfloor\right)=\sum_{i=\left\lfloor\frac{k}{2}\right\rfloor}^{k}\E(i+1)-\E(i)\leqslant\sum_{i=\left\lfloor\frac{k}{2}\right\rfloor}^k\left(\frac{Ls}{2}-1\right)i\sigma\norm{x_{i+1}-x_i}^2.
	\]
	Thus\[
	\min_{0\leqslant i\leqslant k}\norm{x_{i+1}-x_i}^2\leqslant\min_{\left\lfloor\frac{k}{2}\right\rfloor\leqslant i\leqslant k}\norm{x_{i+1}-x_i}^2\leqslant\frac{4\E(k+1)-4\E(\left\lfloor\frac{k}{2}\right\rfloor)}{\left[k(k+1)-\left(\left\lfloor\frac{k}{2}\right\rfloor-1\right)\left\lfloor\frac{k}{2}\right\rfloor\right](2-Ls)}.
	\]
	Since the non-negative sequence $ \{\E(k)\} $ is non-increasing, $ \lim\limits_{k\to\infty}\E(k) $ exists. By Cauchy's criteria, $ \E(k+1)-\E\left(\left\lfloor\frac{k}{2}\right\rfloor\right)=o(1) $. In conclusion\[
	\min_{0\leqslant i\leqslant k}\norm{x_{i+1}-x_i}^2=o\left(\frac{1}{k^2}\right).
	\]
	\section{The Proof in Section \ref{sec:3}}
	\label{sec:b}
	\subsection{The Proof of Theorem \ref{thm:4}}
	\label{sec:thm4}
	Recall the following Lyapunov functions\[
	\E(k)=(k+1)(k+2)\sigma s[f(x_k)-f(x^*)]+4D_{\varphi^*}(z_{k+1},z^*).
	\]
	Step 1: Dividing the difference $ \E(k+1)-\E(k) $ into three parts. By showing the difference $ \E(k+1)-\E(k) $, we have\begin{align*}
		\E(k+1)-\E(k)=&\underbrace{(k+1)(k+2)\sigma s[f(x_{k+1})-f(x_k)]}_{\text{I}}+\underbrace{(2k+4)\sigma s[f(x_{k+1})-f(x^*)]}_{\text{II}}\\
		&+\underbrace{4(\varphi^*(z_{k+2})-\varphi^*(z_{k+1})+\inner{\nabla\varphi^*(z^*),z_{k+2}-z_{k+1}})}_{\text{III}}.
	\end{align*}
	Step 2: Estimate the upper bound of three parts. Since $ f\in\Fone $, we have\[
	f(x_{k+1})-f(x_k)\leqslant\inner{\nabla f(x_{k+1}),x_{k+1}-x_k}-\frac{1}{2L}\norm{\nabla f(x_{k+1})-\nabla f(x_k)}^2_2.
	\]
	Thus the upper bound of I can be estimated by\[
	\text{I}\leqslant \underbrace{(k+1)(k+2)\sigma s\inner{\nabla f(x_{k+1}),x_{k+1}-x_k}}_{\text{I}_1}-\underbrace{\frac{(k+1)(k+2)\sigma s}{2L}\norm{\nabla f(x_{k+1})-\nabla f(x_k)}^2_2}_{\text{I}_2}.
	\]
	Also due to $ f\in\Fone $, we have\begin{align*}		
		f(x_{k+1})-f(x^*)&\leqslant\inner{\nabla f(x_{k+1}),x_{k+1}-x^*}-\frac{1}{2L}\norm{\nabla f(x_{k+1})}^2_2\\
		&\leqslant\inner{\nabla f(x_{k+1}),x_{k+1}-x^*}-\frac{s}{2}\norm{\nabla f(x_{k+1})}^2_2.
	\end{align*}
	Thus 
	\begin{align*}
		\text{II}\leqslant\underbrace{(2k+4)\sigma s\inner{\nabla f(x_{k+1}),x_{k+1}-x^*}}_{\text{II}_1}-\underbrace{(k+2)\sigma s^2\norm{\nabla f(x_{k+1})}^2_2}_{\text{II}_2}.
	\end{align*}
	Since $ \varphi^*\in\Fone[\sigma^{-1}] $ and $ z_{k+2}-z_{k+1}=-\frac{(k+2)\sigma s}{2}\nabla f(x_{k+1}) $, the upper bound of III is
	\begin{align*}
		\text{III}&\leqslant 4\inner{z_{k+2}-z_{k+1},\nabla\varphi^*(z_{k+1})-\nabla\varphi^*(z^*)}+\frac{2}{\sigma}\norm{z_{k+2}-z_{k+1}}^2_2\\
		&=\underbrace{4\inner{z_{k+2}-z_{k+1},\nabla\varphi^*(z_{k+1})-\nabla\varphi^*(z^*)}}_{\text{III}_1}+\underbrace{\frac{(k+2)^2\sigma s^2}{2}\norm{\nabla f(x_{k+1})}^2_2}_{\text{III}_2}.
	\end{align*}
	
	Step 3: Estimate the upper bound of $ \E(k+1)-\E(k) $. Combining $ \text{I}, \text{II}_1 $ and $ \text{III}_1 $, we have\begin{align*}
		\E(k+1)-\E(k)\leqslant&\underbrace{(k+1)(k+2)\sigma s\inner{\nabla f(x_{k+1}),x_{k+1}-x_k}}_{\text{I}_1}-\underbrace{\frac{(k+1)(k+2)\sigma s}{2L}\norm{\nabla f(x_{k+1})-\nabla f(x_k)}^2_2}_{\text{I}_2}\\
		&+\underbrace{(2k+4)\sigma s\inner{\nabla f(x_{k+1}),x_{k+1}-x^*}}_{\text{II}_1}-\underbrace{(k+2)\sigma s^2\norm{\nabla f(x_{k+1})}^2_2}_{\text{II}_2}\\
		&+\underbrace{4\inner{z_{k+2}-z_{k+1},\nabla\varphi^*(z_{k+1})-\nabla\varphi^*(z^*)}}_{\text{III}_1}+\underbrace{\frac{(k+2)^2\sigma s^2}{2}\norm{\nabla f(x_{k+1})}^2_2}_{\text{III}_2}.
	\end{align*}
	
	First, consider $ \text{I}_1+\text{II}_1 $.\begin{align*}
		\text{I}_1+\text{II}_2=&\sigma s\inner{(k+2)\nabla f(x_{k+1}),(k+1)(x_{k+1}-x_k)}+\sigma s\inner{(k+2)\nabla f(x_{k+1}),2(x_{k+1}-x^*)}\\
		=&\sigma s\inner{(k+2)s\nabla f(x_{k+1}),2\nabla\varphi^*(z_{k+1})-(k+1)s\nabla f(x_k)-2x_{k+1}}+\sigma s\inner{(k+2)\nabla f(x_{k+1}),2(x_{k+1}-x^*)}\\
		=&\sigma s\inner{(k+2)s\nabla f(x_{k+1}),2\nabla\varphi^*(z_{k+1})-2\nabla\varphi^*(z^*)}-(k+1)(k+2)\sigma s^2\inner{\nabla f(x_{k+1}),\nabla f(x_k)}.
	\end{align*}
	Since $ z_{k+2}-z_{k+1}=-\dfrac{(k+2)\sigma s}{2}\nabla f(x_{k+1}), $ we have\[
	\text{III}_1=-\sigma s\inner{(k+2)\nabla f(x_{k+1}),2\nabla\varphi^*(z_{k+1})-2\nabla\varphi^*(z^*)}
	\]
	Thus\[
	\text{I}_1+\text{II}_2+\text{III}_1=-(k+1)(k+2)\sigma s^2\inner{\nabla f(x_{k+1}),\nabla f(x_k)}.
	\]
	Due to $ 0<s\leqslant\frac{1}{L} $, we have
	\begin{align*}
		-\text{I}_2\leqslant&-\frac{(k+1)(k+2)\sigma s^2}{2}\norm{\nabla f(x_{k+1})-\nabla f(x_k)}^2_2\\
		=&-\frac{(k+1)(k+2)\sigma s^2}{2}\left[\norm{\nabla f(x_{k+1})}^2_2+\norm{\nabla f(x_k)}^2\right]+(k+1)(k+2)\sigma s^2\inner{\nabla f(x_{k+1}),\nabla f(x_k)}.
	\end{align*}
	Now adding $ \text{I}_1+\text{II}_2+\text{III}_3 $ on both sides of the above inequality, we have\[
	\text{I}_1+\text{II}_2+\text{III}_3-\text{I}_2=-\frac{(k+1)(k+2)\sigma s^2}{2}\left[\norm{\nabla f(x_{k+1})}^2_2+\norm{\nabla f(x_k)}_2^2\right].
	\]
	The upper bound of $ \E(k+1)-\E(k) $ is given as followed.\begin{align*}
		\E(k+1)-\E(k)\leqslant&\left(-\frac{(k+1)(k+2)}{2}-(k+2)+\frac{(k+2)^2}{2}\right)\sigma s^2\norm{\nabla f(x_{k+1})}^2\\
		&-\frac{(k+1)(k+2)\sigma s^2}{2}\norm{\nabla f(x_k)}^2\\
		\leqslant&-\frac{(k+1)(k+2)\sigma s^2}{2}\norm{\nabla f(x_k)}^2.
	\end{align*}
	\subsection{The Proof of Theorem \ref{thm:5}}
	\label{sec:thm5}
	Recall the Lyapunov function:
	\[
	\E(k)=k(k+1)\sigma s[f(y_k)-f(x^*)]+4D_{\varphi^*}(z_k,z^*).
	\]
	Step 1: Diving the difference $ \E(k+1)-\E(k) $ into four terms. 
	\begin{align*}
		\E(k+1)-\E(k)=&\underbrace{(k+1)(k+2)\sigma s[f(y_{k+1})-f(x_k)]}_{\text{I}}+\underbrace{k(k+1)\sigma s[f(x_k)-f(y_k)]}_{\text{II}}\\
		&+\underbrace{(2k+2)\sigma s[f(x_k)-f(x^*)]}_{\text{III}}+\underbrace{4[\varphi^*(z_{k+1})-\varphi^*(z_k)+\inner{\nabla\varphi^*(z^*),z_{k+1}-z_k}]}_{\text{IV}}.
	\end{align*}
	Step 2: Estimate the upper bound of all four terms.
	$ f\in\mathscr{F}^{1,1}_L $,\begin{align*}
		s[f(y_{k+1})-f(x_k)]\leqslant&s\inner{\nabla f(x_k),y_{k+1}-x_k}+\frac{Ls}{2}\norm{y_{k+1}-x_k}^2\\
		=&s\inner{\nabla f(x_k),y_{k+1}-x_k}+\frac{1}{2}\norm{y_{k+1}-x_k}^2-\frac{1-Ls}{2}\norm{y_{k+1}-x_k}^2\\
		=&\min_y\{s\inner{\nabla f(x_k),y-x_k}+\frac{1}{2}\norm{y-x_k}|y\in C\}-\frac{1-Ls}{2}\norm{y_{k+1}-x_k}^2\\
		\leqslant&s\inner{\nabla f(x_k),\frac{k}{k+2}y_k+\frac{2}{k+2}\nabla\varphi^*(z_{k+1})-x_k}\\
		&+\frac{1}{2}\norm{\frac{k}{k+2}y_k-x_k+\frac{2}{k+2}\nabla\varphi^*(z_{k+1})}^2-\frac{1-Ls}{2}\norm{y_{k+1}-x_k}^2\\
		=&s\inner{\nabla f(x_k),\frac{k}{k+2}y_k+\frac{2}{k+2}\nabla\varphi^*(z_{k+1})-x_k}\\&
		+\frac{2}{(k+2)^2}\norm{\nabla\varphi^*(z_{k+1})-\nabla\varphi^*(z_k)}^2-\frac{1-Ls}{2}\norm{y_{k+1}-x_k}^2.
	\end{align*}
	By the definition of $ y_{k+1} $, we have\[
	s\inner{\nabla f(x_k),y_{k+1}-x_k}+\frac{1}{2}\norm{y_{k+1}-x_k}^2=\min_{y\in C}\{s\inner{\nabla f(x_k),y-x_k}+\frac{1}{2}\norm{y-x_k}^2\}.
	\]
	Since $ y_k\in C, \nabla\varphi^*(z_{k+1})\in C $, the convex combination of then belongs to $ C $, thus
	\begin{align*}
		&s\inner{\nabla f(x_k),y_{k+1}-x_k}+\frac{1}{2}\norm{y_{k+1}-x_k}^2\\
		\leqslant& s\inner{\nabla f(x_k),\frac{k}{k+2}y_k+\frac{2}{k+2}\nabla\varphi^*(z_{k+1})-x_k}+\frac{1}{2}\norm{\frac{k}{k+2}y_k-x_k+\frac{2}{k+2}\nabla\varphi^*(z_{k+1})}^2.
	\end{align*}
	Since $ x_k=\dfrac{k}{k+2}y_k+\dfrac{2}{k+2}\nabla\varphi^*(z_k) $, we have $ \frac{k}{k+2}y_k-x_k=-\frac{2}{k+2}\nabla\varphi^*(z_k) $, thus
	\begin{align*}
		s[f(y_{k+1})-f(x_k)]\leqslant& s\inner{\nabla f(x_k),\frac{k}{k+2}y_k+\frac{2}{k+2}\nabla\varphi^*(z_{k+1})-x_k}\\
		&\frac{2}{(k+2)^2}\norm{\nabla\varphi^*(z_{k+1})-\nabla\varphi^*(z_k)}^2-\frac{1-Ls}{2}\norm{y_{k+1}-x_k}^2.
	\end{align*}
	Thus the upper bound of I is given by\begin{align}
		\text{I}\leqslant& \underbrace{(k+1)\sigma s\inner{\nabla f(x_k),ky_k+2\nabla\varphi^*(z_{k+1})-(k+2)x_k}}_{\text{I}_1}\\
		&+\underbrace{2\sigma\norm{\nabla\varphi^*(z_{k+1})-\nabla\varphi^*(z_k)}^2}_{\text{I}_2}-\frac{(k+1)(k+2)(1-Ls)\sigma}{2}\norm{y_{k+1}-x_k}^2.
	\end{align}
	The above inequality uses a simple estimation $ \frac{k+1}{k+2}<1 $, which simplifies the successive calculation.
	
	Also due to the convexity of $ f $, the upper bound of II, III can be estimated by\begin{align*}
		\text{II}&\leqslant \underbrace{(k+1)\sigma s\inner{\nabla f(x_k),kx_k-ky_k}}_{\text{II}_1},\\
		\text{III}&\leqslant \underbrace{(k+1)\sigma s\inner{\nabla f(x_k),2x_k-2x^*}}_{\text{III}_1}.
	\end{align*}
	
	Since $ \varphi^*\in\Fone[\sigma^{-1}] $, we have\[
	\text{IV}\leqslant \underbrace{4\inner{z_{k+1}-z_k,\nabla\varphi^*(z_{k+1})-\nabla\varphi^*(z^*)}-2\sigma\norm{\nabla\varphi^*(z_{k+1})-\nabla\varphi^*(z_k)}}_{\text{IV}_1}
	\]
	Step 3: Deduce the upper bound of $ \E(k+1)-\E(k) $. Summing up all the upper bound of four terms, we have
	\begin{align*}
		\E(k+1)-\E(k)\leqslant&\underbrace{(k+1)\sigma s\inner{\nabla f(x_k),ky_k+2\nabla\varphi^*(z_{k+1})-(k+2)x_k}}_{\text{I}_1}+\underbrace{2\sigma\norm{\nabla\varphi^*(z_{k+1})-\nabla\varphi^*(z_k)}^2}_{\text{I}_2}\\
		&+\underbrace{(k+1)\sigma s\inner{\nabla f(x_k),kx_k-ky_k}}_{\text{II}_1}+\underbrace{(k+1)\sigma s\inner{\nabla f(x_k),2x_k-2x^*}}_{\text{III}_1}\\
		&+\underbrace{4\inner{z_{k+1}-z_k,\nabla\varphi^*(z_{k+1})-\nabla\varphi^*(z^*)}-2\sigma\norm{\nabla\varphi^*(z_{k+1})-\nabla\varphi^*(z_k)}}_{\text{IV}_1}\\
		&-\frac{(k+1)(k+2)(1-Ls)\sigma}{2}\norm{y_{k+1}-x_k}^2.
	\end{align*}
	First, consider $ \text{I}_1+\text{II}_1+\text{III}_1 $. The sum of these three terms is 
	\begin{align*}
		\text{I}_1+\text{II}_1+\text{III}_1&=(k+1)\sigma s\inner{\nabla f(x_k),2\nabla\varphi^*(z_{k+1})-2x^*}.
	\end{align*}
	Since $ (k+1)\sigma s\nabla f(x_k)=-2(z_{k+1}-z_k), \nabla\varphi^*(z^*)=x^* $, we have\[
	\text{I}_1+\text{I}_2+\text{II}_1+\text{III}_1+\text{IV}_1=0.
	\]
	In conclusion,\[
	\E(k+1)-\E(k)\leqslant-\frac{(k+1)(k+2)(1-Ls)\sigma}{2}\norm{y_{k+1}-x_k}^2.
	\]
	
	\subsection{The Proof of Theorem \ref{thm:7}}
	\label{sec:thm7}
	Step 1: Consider the Lyapunov function\[
	\E(k)=Csk^{(p)}[f(y_k)-f(x^*)]+D_{\varphi^*}(z_k,z^*).
	\]
	Step 2: Dividing the difference $ \E(k+1)-\E(k) $ into 3 pieces.\begin{align*}
		\E(k+1)-\E(k)=&\underbrace{Csk^{(p)}[f(y_{k+1})-f(y_k)]}_{\text{I}}+\underbrace{Csp(k+1)^{(p-1)}[f(y_{k+1})-f(x^*)]}_{\text{II}}\\
		&+\underbrace{\varphi^*(z_{k+1})-\varphi^*(z_k)+\inner{\nabla\varphi^*(z^*),z_{k+1}-z_k}}_{\text{IV}}.
	\end{align*}
	Step 3: Estimate the upper bound of 3 pieces. Because of the convexity of $ f $, we have\begin{align*}
		\text{I}&\leqslant \underbrace{Csk^{(p)}\inner{\nabla f(y_{k+1}),y_{k+1}-y_k}}_{\text{I}_1},\\
		\text{II}&\leqslant\underbrace{Csp(k+1)^{(p-1)}\inner{\nabla f(y_{k+1}),y_{k+1}-x^*}}_{\text{II}_1}.
	\end{align*}
	Utilize lemma \ref{lem:2}, we have\begin{align*}
		\text{III}\leqslant&\inner{z_{k+1}-z_k,\nabla\varphi^*(z_k)-\nabla\varphi^*(z^*)}+\frac{p-1}{p}\left(\frac{1}{\sigma}\right)^{\frac{1}{p-1}}\norm{z_{k+1}-z_k}^{\frac{p}{p-1}}_*\\
		=&-\underbrace{Csp(k+1)^{(p-1)}\inner{\nabla f(y_{k+1}),\nabla\varphi^*(z_k)-\nabla\varphi^*(z^*)}}_{\text{III}_1}+\underbrace{(p-1)\left(\frac{p}{\sigma}\right)^{\frac{1}{p-1}}[Cs(k+1)^{(p-1)}]^\frac{p}{p-1}\norm{\nabla f(y_{k+1})}^{\frac{p}{p-1}}_*}_{\text{III}_2}.
	\end{align*}
	Step 4: Calculate the upper bound of $ \E(k+1)-\E(k) $. By summing up the previous result, we have\begin{align*}
		\E(k+1)-\E(k)\leqslant&\underbrace{Csk^{(p)}\inner{\nabla f(y_{k+1}),y_{k+1}-y_k}}_{\text{I}_1}+\underbrace{Csp(k+1)^{(p-1)}\inner{\nabla f(y_{k+1}),y_{k+1}-x^*}}_{\text{II}_1}\\
		&-\underbrace{Csp(k+1)^{(p-1)}\inner{\nabla f(y_{k+1}),\nabla\varphi^*(z_k)-\nabla\varphi^*(z^*)}}_{\text{III}_1}\\&+\underbrace{(p-1)\left(\frac{p}{\sigma}\right)^{\frac{1}{p-1}}[Cs(k+1)^{(p-1)}]^\frac{p}{p-1}\norm{\nabla f(y_{k+1})}^{\frac{p}{p-1}}_*}_{\text{III}_2}.
	\end{align*}
	First, we consider the sum of first three terms.\begin{align*}
		\text{I}_1+\text{II}_1=&Cs(k+1)^{(p-1)}\inner{\nabla f(y_{k+1}),(k+p)y_{k+1}-ky_k-px^*}\\
		=&Cs(k+1)^{(p)}\inner{\nabla f(y_{k+1}),y_{k+1}-x_{k+1}}+Cs(k+1)^{(p-1)}\inner{\nabla f(y_{k+1}),(k+p)x_{k+1}-ky_k-px^*}.
	\end{align*}
	By the basic condition \eqref{eq:acceleratekey} and the recursive rule \eqref{eq:wibisonomirrormap}, we have\begin{align*}
		\text{I}_1+\text{II}_1\leqslant&-CMs^{\frac{p}{p-1}}(k+1)^{(p)}\norm{\nabla f(y_{k+1})}^{\frac{p}{p-1}}_*+Csp(k+1)^{(p-1)}\inner{\nabla f(y_{k+1}),p\nabla\varphi^*(z_k)-p\nabla\varphi^*(z^*)}.
	\end{align*}
	Thus\begin{align*}
		\text{I}_1+\text{II}_1-\text{III}_1\leqslant-KMS^{\frac{p}{p-1}}(k+1)^{(p)}\norm{\nabla f(y_{k+1})}^{\frac{p}{p-1}}_*.
	\end{align*}
	In conclusion, \[
	\E(k+1)-\E(k)\leqslant\left\{-CM(k+1)^{(p)}+(p-1)\left(\frac{p}{\sigma}\right)^{\frac{1}{p-1}}[C(k+1)^{(p-1)}]^{\frac{p}{p-1}}\right\}s^{\frac{p}{p-1}}\norm{\nabla f(y_{k+1})}_*^{\frac{p}{p-1}}.
	\]
	Step 5: Convergence rates of \eqref{eq:wibisonomirrormap}. Since $[(k+1)^{(p-1)}]^{\frac{p}{p-1}}\leqslant (k+1)^{(p)}$, we have\[
	\E(k+1)-\E(k)\leqslant\left[-M+(p-1)\left(\frac{p}{\sigma}\right)^{\frac{1}{p-1}}C^{\frac{1}{p-1}}\right]C(k+1)^{(p)}s^{\frac{p}{p-1}}\norm{\nabla f(y_{k+1})}^{\frac{p}{p-1}}_*.
	\]
	If $0<C\leqslant\dfrac{\sigma M^{p-1}}{(p-1)^{p-1}p}$, then $\E(k+1)-\E(k)\leqslant 0$. Thus\[
	Csk^{(p)}[f(y_k)-f(x^*)]\leqslant\E(k)\leqslant\E(0)=D_\varphi(x^*,x_0).
	\]
	Let $D=Cs^{\frac{p}{p-1}}[M-(p-1)\left(\dfrac{p}{\sigma}\right)^{\frac{1}{p-1}}C^{\frac{1}{p-1}}]$. If $0<C<\dfrac{\sigma M^{p-1}}{(p-1)^{p-1}p}$, then $D>0$. Summing $\E(i+1)-\E(i)$ from $\left\lfloor\dfrac{k}{2}\right\rfloor$ to $k$, we have\begin{align*}
		\E(k+1)-\E\left(\left\lfloor\dfrac{k}{2}\right\rfloor\right) &=\sum_{i=\left\lfloor\frac{k}{2}\right\rfloor}^{k}\E(i+1)-\E(i) \\
		&\leqslant-D\sum_{i=\left\lfloor\frac{k}{2}\right\rfloor}^{k}(i+1)^{(p)}\norm{\nabla f(y_{i+1})}^{\frac{p}{p-1}}_*\\
		&\leqslant-D\min_{0\leqslant i\leqslant k}\norm{\nabla f(y_{i+1})}^{\frac{p}{p-1}}_*\sum_{i=\left\lfloor\frac{k}{2}\right\rfloor}^{k}(i+1)^{(p)}\\
		&=-\frac{D}{p+1}\left[(k+1)^{(p+1)}-\left(\left\lfloor\dfrac{k}{2}\right\rfloor\right)^{(p+1)}\right]\min_{0\leqslant i\leqslant k}\norm{\nabla f(y_{i+1})}^{\frac{p}{p-1}}_*.
	\end{align*}
	Since $\E(k+1)-\E\left(\left\lfloor\dfrac{k}{2}\right\rfloor\right)\to 0$, we have\[
	\min_{0\leqslant i\leqslant k}\norm{\nabla f(y_{i+1})}^{\frac{p}{p-1}}_*=o\left(\frac{1}{k^{p+1}}\right).
	\]
	

\end{document}